\documentclass[12pt,a4paper]{article}

\usepackage{amsfonts}
\usepackage{amsmath}
\usepackage{graphicx,graphics,color,rotating,cancel}
\usepackage[normalem]{ulem}
\DeclareGraphicsExtensions{.pdf, .jpg, .tif, .png}
\usepackage{pgfplots}
\usepackage{tikz}
\usepackage{adjustbox}
\usepackage{threeparttable}
\usepackage{tabularx}
\usepackage{hyperref}
\usepackage{float}
\usepackage{longtable}
\usepackage{placeins} 
\usepackage[utf8]{inputenc}
\usepackage{amsmath}
\usepackage{amsfonts}
\usepackage{amssymb}
\usepackage{enumitem}
\usepackage{amsthm}
\usepackage{epsfig}
\usepackage{hyperref}
\usepackage{xcolor}
\usepackage{comment}
\usepackage[english]{babel}
\newtheorem{theorem}{Theorem}
\newtheorem{lem}{Lemma}

\newtheorem{proposition}{Proposition}
\newtheorem{rmk}{Remark}
\newtheorem{definition}{Definition}
\providecommand{\pr}[1]{\left(#1\right)} %(.)
\providecommand{\pp}[1]{\left[#1\right]} %[.]
\providecommand{\set}[1]{\left\lbrace#1\right\rbrace} %{.}
\providecommand{\scal}[1]{\left\langle#1\right\rangle}%<.>
\providecommand{\dual}[1]{{_{H^{-1}}\scal{#1}_{H^{1}_0}}}
\newcommand{\Db}[1]{{\color{blue}#1}}

\begin{document}

\title{The vanishing latent heat limit of a stochastic Stefan problem : An
error estimate}
\author{ Ioana Ciotir \\
Normandie University, INSA de Rouen Normandie,\\
LMI (EA 3226 - FR CNRS 3335), 76000 Rouen, France. \vspace{.5cm} \\
Perla El Kettani\\
Aix Marseille University, Toulon University,\\
Centre de Physique Th\'eorique, CNRS, Marseille, France. \vspace{.5cm} \\
Dan Goreac\\
Faculté des sciences et de génie, Université Laval \\
Québec G1V 0A6, Canada \vspace{.5cm} \\ 
Danielle Hilhorst \\
Laboratoire de Mathématiques, Universit\'e Paris-Saclay,\\
91405 Orsay Cedex, France\vspace{.5cm} \\}
\maketitle

\begin{abstract}
	\noindent The purpose of this paper is to extend an article by Hilhorst, Mimura and Sch\"atzle \cite{HMS} about the limit as the latent heat coefficient tends to zero of a two-phase Stefan problem arising in biology. We introduce a rather general additive noise white in time and colored in space, and search for the limit of the solution of the corresponding stochastic Stefan problem as the latent heat coefficient vanishes. We first prove the existence and uniqueness of the weak solution of this problem, and then study the limit of the solution as the latent heat coefficient tends to zero. Unlike in \cite{HMS}, our method of proof is based upon an error estimate between the solution of the Stefan problem with positive latent heat and that of the Stefan problem with zero latent heat, which seems to be novel even in the deterministic case when no noise is added.
\end{abstract}

\noindent \textbf{AMS subject Classification}: 80A22, 35R35, 60H15\\
\noindent \textbf{Keywords}: Stefan problem; stochastic PDE; porous media equations; It\^o noise

\section{Introduction}

We consider the family of two-phase Stefan problems 
\begin{equation*}
(SP)\quad \quad \left\{ 
\begin{array}{ll}
db_{\sigma }(u_{\sigma } {(t)})\ni \pr{\Delta {D}(u_{\sigma }{(t)})+h(u_{\sigma } {(t)})} {\,dt}+{\sqrt{Q}}dW(t),
& {\text{ on}}~\mathcal{O}\times \left( 0,T\right) \times \Omega, \\ 
D\left( u_{\sigma }\right) =0, & {\text{ on}}~\partial \mathcal{O}\times
\left( 0,T\right) \times \Omega , \\ 
b_{\sigma }\left( u_{\sigma }\right) =b_{\sigma }\left( u_{0}\right)
=b_{\sigma }^{0}, &  {\text{ on}}~\mathcal{O}\times \left\{ 0\right\}\times \Omega ,
\end{array}%
\right. 
\end{equation*}
\noindent where $T>0$, $\mathcal{O}\subset {\mathbb{R}}^{d}$ is a bounded open set with smooth boundary, and $\pr{\Omega ,\mathcal{F},{\mathbb{P},\mathbb{F}}}$ is a {complete}
probability {basis}. {The process} $W$ is {a cylindrical} Wiener process on $L^2(\mathcal{O})$ {of the form \(\sum_{k=1}^\infty \beta_k(\cdot)e_k\), where $e_k$ is the ${L}^2(\mathcal{O})$-orthonormal basis consisting of eigenfunctions of the homogeneous Dirichlet boundary Laplace operator. The processes $\beta_k$ are independent, standard one-dimensional Brownian motions. $Q$ is a bounded, symmetric, non-negative operator on ${L}^2(\mathcal{O})$, and of trace class}. {We assume} that the latent heat coefficient $\sigma \in (0,1)$ tends to zero,
and suppose that $b_{\sigma }$ is the set-valued function 
\begin{equation*}
b_{\sigma }\left( r\right) =\left\{ 
\begin{array}{ll}
\{r\}, & ~\text{if }r<0, \\ 
\left[ 0,\sigma \right] , & ~\text{if }r=0, \\ 
\{r+\sigma \}, & ~\text{if }r>0.%
\end{array}%
\right. 
\end{equation*}%
The subdomain on which the solution $u_{\sigma }$ vanishes coincides with the interface
between two biological populations competing for their habitat, and the
drift term $h$ encodes intraspecific competition mechanisms; see \cite{HMS}.

While biological models typically involve quadratic intraspecific
competition terms so that the function $h$ is also quadratic, our analysis
requires a Lipschitz-regularized version to address technical constraints in
the stochastic setting.  The biologically meaningful
scenario involving a quadratic growth term - while presenting a richer
dynamics - introduces analytical challenges beyond our current scope and is
reserved for future research. \\
Adding the noise term $\sqrt{Q}dW(\cdot) $ to the deterministic Stefan problem 
 permits to take into consideration all sorts of small perturbations in the biological 
 context. This can be interpreted either as an absolute measurement error or, alternatively, as a mesoscopic scaling arising from a central limit theorem, complementing the macroscopic deterministic system. We stress that a linearly scaled multiplicative noise framework does not yield bounded solutions, but only ensures non-negativity. For a more biologically representative model exhibiting boundedness, the noise should be of quadratic type; however, as with the drift term, such extensions fall outside the scope of the present work.
 
The stochastic porous media equation has been intensively studied during the last ten years (see \cite{BDPR} for an extensive overview). For the case of $R^{d}$ see \cite{Rd} and \cite{reika}. For an original perspective concerning uniqueness of the solution, see also \cite{francesco} and for the probabilistic representation see \cite{francesco2}.
The special case of stochastic versions of Stefan problem was first studied in \cite{BDP}. For further extensions presenting a convection term, our readers are referred to \cite{BII} for the deterministic case, respectively to \cite{anna} for the stochastic case. For some recent developments involving a turbulence type of noise see \cite{franco1}, \cite{franco2}. 

A different way of looking at the solution is in the spirit of \cite{IDS} by using the potential or Fitzpatrick's function, see also \cite{Edi1}. For different diffusion, one can reason in the spirit of \cite{Edi2}.

In the present case we assume the initial function $u_0 $ is smooth, and such that $- 1
\leq u_0 \leq 1 - \sigma$, which implies the bounds \[- 1 \leq b_\sigma( u_0) \leq 1
.\] The function ${D}$ is defined by 
\begin{equation*}
{D}\left( r\right) =\left\{ 
\begin{array}{cc}
d_{2}r, & ~\text{if }r\leq 0, \\ 
d_{1}r, & ~\text{if }r>0.%
\end{array}
\right.
\end{equation*}

\noindent The function $h$ is Lipschitz continuous on $\mathbb{R}$, with %
$h(0) = 0 $. {We further assume} that $M$ is a positive constant such that, for all reals $u$
and $v$, 
\begin{equation}  \label{hyph1}
	\vert h(u) - h(v)\vert \leq \sqrt{M\ \min (d_1,d_2)} \vert u - v\vert.
\end{equation}
\begin{rmk}Since ${D^{'}} \geq \min(d_1,d_2)$, it follows that 
\begin{equation*}  
	\min (d_1,d_2) \vert u - v\vert	\leq \vert D(u) - D(v)\vert  ,
\end{equation*}
which combined with the hypothesis (\ref{hyph1}) implies that 
\begin{equation}  \label{hyp1}
	\vert h(u) - h(v)\vert  \leq \sqrt{\frac{M}{\min (d_1,d_2)}} \vert {D}(u) - {D}(v)\vert, 
\end{equation}
\end{rmk}

The main purpose of this paper is to study the limiting behavior of the
solution $u_\sigma$ as the latent heat coefficient vanishes, namely $\sigma \rightarrow 0$.
Before doing so, we present existence and uniqueness proofs, as well as a
priori estimates for the weak solution of Problem $(SP)$; our assumptions are similar to those of Vallet \cite{Vallet}, but we consider a different definition of the solution and the proof is based on a Galerkin method instead of a discretization in time. For more details on this method, see \cite{IDS}.

There have been several articles discussing the
singular limit of Problem $(SP)$ as $\sigma \rightarrow 0$ in different
contexts \cite{Ciotir}, \cite{eu-conv}, \cite{eu-conv2},  \cite{franco2}, \cite{HMS}. Our method of proof {exhibits} an explicit error estimate, which is also new, to the best of our knowledge, in the deterministic case.
\medskip

The organization of this paper is as follows. In Section 2, we present a slightly different formulation of the Stefan problem and define a notion of weak solution. We prove the existence and uniqueness of the solution in Section 3; the proof of the existence is based on the stochastic Galerkin method, while the proof of the uniqueness follows from computations in the $H^{-1}$ topology. We derive an error estimate between the solution to the Stefan problem with positive latent heat and that of the Stefan problem with zero latent heat in Section 4. This result seems to be novel even in the deterministic case -- when no noise is added -- as discussed in Section 5. 

\section{The stochastic Stefan problem}

It is handy to perform the change of unknown function $X_{\sigma } =
b_{\sigma }\left( u\right)$ in Problem (SP); we then obtain the initial
value problem 
\begin{equation*}
( P) \quad \quad \left\{%
\begin{array}{ll}
dX_{\sigma }{(t)}=\left( \Delta {D}\left( b_{\sigma }^{-1}\left( X_{\sigma
}{(t)}\right) \right) +h\left( b_{\sigma }^{-1}\left( X_{\sigma }{(t)}\right) \right)
\right) dt+ {\sqrt{Q}}dW{(t)}, & {\text{on }}\mathcal{O}\times \left( 0,T\right) , \\ 
D\left( b_{\sigma }^{-1}\left( X_{\sigma }\right) \right) =0, & {\text{on }}
\partial \mathcal{O}\times \left( 0,T\right) , \\ 
X_{\Db{\sigma}}\left( 0\right) =b_\sigma^0, & {\text{on }}\mathcal{O}.%
\end{array}
\right.
\end{equation*}
\noindent We {note} that the differential equation in Problem $(P)$ is a
stochastic nonlinear diffusion equation where 
\begin{equation*}
b_{\sigma }^{-1}\left( r\right) =\left\{ 
\begin{array}{ll}
r, & r<0, \\ 
0, & r\in \left[ 0,\sigma \right] , \\ 
r-\sigma , & r>\sigma.%
\end{array}
\right.
\end{equation*}%
Next we introduce some notations 
\begin{equation*}
{\cal D}_\sigma\left( r\right) := {D}\left( b_{\sigma }^{-1}\left( r\right)
\right) =\left\{ 
\begin{array}{ll}
d_{2}r, & r<0, \\ 
0, & r\in \left[0,\sigma \right] , \\ 
d_{1}\left( r-\sigma \right) , & r>\sigma ,%
\end{array}
\right.
\end{equation*}%
\begin{equation*}
h_{\sigma }\left( r\right) :=h\left( b_{\sigma }^{-1}\left( r\right) \right)
=\left\{ 
\begin{array}{ll}
h\left( r\right) , & r<0, \\ 
0, & r\in \left[ 0,\sigma \right] , \\ 
h\left( r-\sigma \right) , & r>\sigma.%
\end{array}
\right.
\end{equation*}%
\noindent We further define 

\begin{equation}  \label{defG}
G_\sigma(s) := \int_{0}^s \sqrt{\mathcal{D}^{\prime }_\sigma (\tau)} d \tau,\text{ for } s\in \mathbb{R}.
\end{equation}

The inequality \eqref{hyp1} implies that 
\begin{equation}  \label{hsigma}
\vert h_\sigma({\tilde u}) - h_\sigma({\tilde v}) \vert^2 \leq M \vert {\cal D}_\sigma({\ \tilde u}) - {\cal D}_\sigma({\ \tilde v})\vert . \vert b_{\sigma
}^{-1} ({\ \tilde u}) - {b_{\sigma }^{-1} (\tilde v) }\vert
\end{equation}
for all reals $\tilde u$ and $\tilde v$. \noindent We now rewrite Problem $%
(P)$ in the form 
\begin{equation*}
(P_\sigma) \quad \quad \left\{ 
\begin{array}{ll}
dX_{\sigma }{(t)}=\pp{\Delta {\cal D}_\sigma\left( X_{\sigma }{(t)}\right) +h_{\sigma
}\left( X_{\sigma }{(t)}\right)}dt+ {\sqrt{Q}}dW{(t)}, & {\text{ on }}\mathcal{O}\times \left(
0,T\right) , \\ 
X_{\sigma }=0, & {\text{ on }}\partial \mathcal{O}\times \left( 0,T\right) , \\ 
X_{\sigma }\left( 0\right) =b_{\sigma }^{0}, & {\text{ on }}\mathcal{O}.%
\end{array}
\right.  
\end{equation*}

We define a solution as follows. 

\begin{definition}
\label{weak def}  The function $X_\sigma$ is a solution of Problem $%
(P_\sigma)$ if 

\begin{enumerate}
\item[(i)] $X_\sigma \in L^\infty (0,T; L^2( \Omega \times \mathcal{O})) $, $
{\cal D}_\sigma\left( X_{\sigma } \right) \in L^2(\Omega \times (0,T); H^1_0(%
\mathcal{O}))$;

\item[(ii)] Almost surely and for a.e. $t \in (0,T)$  
\begin{equation}\label{def-form}
{X_{\sigma}}(t)  = b^0_{\sigma} +\Delta \int_0^t {\cal D}_\sigma\left( X_{\sigma}(s)\right) 
ds + \int_0^t  h_{\sigma }  \left( X_{\sigma}(s) \right) 
ds+ {\sqrt{Q}} W{(t)} \, \, \mathnormal{in} \  H^{-1}(\mathcal{O}).
\end{equation}
\end{enumerate}
\end{definition}

 For the sake of completeness, we recall the definition of the scalar product in ${H^{-1}} (\mathcal{O})$.
	\begin{equation*}
	<f,g>_{-1} = \int_{\mathcal O} \nabla \varphi(x) \,\,  \nabla \psi(x) \, dx, 
\end{equation*}
where 
\begin{equation*}
	-\Delta \varphi = f \mbox{    in    } \mathcal{O}, \, \varphi = 0  \mbox{    on  }  \partial   \mathcal{O}, 
\end{equation*}
and 
\begin{equation*}
	-\Delta \psi = g \mbox{    in    } \mathcal{O}, \, \psi = 0  \mbox{    on  }  \partial   \mathcal{O}.
\end{equation*}

\medskip

\noindent In turn it implies that $<- \Delta f, g >_{-1} = \int_{\mathcal{O}} f(x)g(x) dx$, for
$f\in H_0^1(\mathcal{O}),\ g\in L^2(\mathcal{O})$. \newline

Let us further mention that $H_0^1(\mathcal{O})\subset L^2(\mathcal{O})\subset H^{-1}(\mathcal{O})$, and that one frequently uses the duality product $\dual{f,g}$ between  the spaces $H^{-1} (\cal O)$ and $H_0^1(\cal O)$. In particular,  there holds  \[\dual{f,g}=\scal{f,g}_2:=\int_{\mathcal{O}}f(x)g(x)\, dx,\]for all $f\in L^2(\mathcal{O})$ and all $g\in H_0^1(\mathcal{O})$.

For convenience, the dependency on $x$, as well as other arguments, is going to be dropped in the subsequent estimates.

\section{Existence and uniqueness of the weak solution of Problem $(P_\sigma)$}
\begin{theorem}
Under the assumptions above, Problem $(P_\sigma)$  has a unique solution in the sense of the definition above.
\end{theorem}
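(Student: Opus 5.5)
We prove uniqueness first, by an $H^{-1}$ contraction argument, and then existence by a stochastic Galerkin scheme combined with monotonicity.

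\textbf{Uniqueness.} Let $X_\sigma^1$ and $X_\sigma^2$ be two solutions in the sense of Definition~\ref{weak def}. In the difference $Z=X_\sigma^1-X_\sigma^2$ the noise cancels, so $Z$ solves, pathwise in $H^{-1}(\mathcal O)$,
\[
Z(t)=\Delta\int_0^t\pr{{\cal D}_\sigma(X^1_\sigma)-{\cal D}_\sigma(X^2_\sigma)}ds+\int_0^t\pr{h_\sigma(X^1_\sigma)-h_\sigma(X^2_\sigma)}ds .
\]
We apply the chain rule for $\|Z\|_{-1}^2$, which is legitimate since the integrands lie in $L^2(0,T;H_0^1)$ and $L^2(0,T;L^2)$. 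Using $<-\Delta f,g>_{-1}=\scal{f,g}_2$, this gives
\[
\frac12\|Z(t)\|_{-1}^2+\int_0^t\scal{{\cal D}_\sigma(X^1)-{\cal D}_\sigma(X^2),X^1-X^2}_2ds=\int_0^t\scal{h_\sigma(X^1)-h_\sigma(X^2),Z}_{-1}ds .
\]
For the right-hand side we use $\scal{f,Z}_{-1}\le \|f\|_{-1}\|Z\|_{-1}\le C\|f\|_2\|Z\|_{-1}$ and then \eqref{hsigma}. This bounds $|h_\sigma(X^1)-h_\sigma(X^2)|^2$ by $M({\cal D}_\sigma(X^1)-{\cal D}_\sigma(X^2))(b_\sigma^{-1}(X^1)-b_\sigma^{-1}(X^2))$.

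Both ${\cal D}_\sigma$ and $b_\sigma^{-1}$ are nondecreasing. Moreover $|b_\sigma^{-1}(X^1)-b_\sigma^{-1}(X^2)|\le|X^1-X^2|$, and both increments have the sign of $X^1-X^2$. Hence this product is at most $M({\cal D}_\sigma(X^1)-{\cal D}_\sigma(X^2))(X^1-X^2)$. Young's inequality then absorbs the $h$-term into the monotone term on the left, leaving $\frac12\|Z(t)\|_{-1}^2\le C\int_0^t\|Z\|_{-1}^2ds$. Gronwall's lemma gives $Z=0$ almost surely.

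\textbf{Existence.} We first regularize, replacing ${\cal D}_\sigma$ by ${\cal D}_\sigma^\varepsilon(r)={\cal D}_\sigma(r)+\varepsilon r$, which is strictly monotone and Lipschitz, and $h_\sigma$ by its Lipschitz version. We then project onto $\mathrm{span}\{e_1,\dots,e_n\}$. The resulting finite-dimensional SDE has Lipschitz coefficients, so it has a unique solution $X^{n,\varepsilon}$.

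The a priori estimates come from Itô's formula applied in $H^{-1}$; the eigenbasis of the Dirichlet Laplacian is natural for the $H^{-1}$ norm. Using that $Q$ is trace class, Burkholder--Davis--Gundy, and the bound on $h$ above, this yields
\[
\mathbb E\sup_t\|X^{n,\varepsilon}\|_{-1}^2+\mathbb E\int_0^T\scal{{\cal D}^\varepsilon_\sigma(X^{n,\varepsilon}),X^{n,\varepsilon}}_2\,dt\le C .
\]
Next we apply Itô's formula to $\|X^{n,\varepsilon}\|_2^2$. Since $\Delta$ commutes with the projection, this gives $\mathbb E\sup_t\|X\|_2^2+\mathbb E\int_0^T\int_{\mathcal O}{\cal D}'_\sigma|\nabla X|^2\le C$, that is, $\nabla G_\sigma(X)\in L^2$. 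From this we also get the bound $\mathbb E\int_0^T\|\nabla{\cal D}_\sigma(X)\|_2^2\le \max(d_1,d_2)\,\mathbb E\int_0^T\|\nabla G_\sigma(X)\|_2^2$.

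\textbf{Passage to the limit (main obstacle).} Weak compactness only gives $X^{n}\rightharpoonup X$ and ${\cal D}_\sigma(X^n)\rightharpoonup\eta$, so the nonlinearities do not pass to the limit directly; the two nonlinear terms need different arguments. To identify $\eta={\cal D}_\sigma(X)$ we would use Minty's trick for the maximal monotone operator $-\Delta{\cal D}_\sigma$ on $H^{-1}$. This rests on an energy equality for the limit, which we obtain by comparing Itô's formula for $\|X^n\|_{-1}^2$ with that for $\|X\|_{-1}^2$; the noise contributions agree because the noise is additive. The difficulty is $h_\sigma$, which is not monotone. We would handle it by applying the weighted test $e^{-Ct}$ to the $H^{-1}$ identity, so that $-\Delta{\cal D}_\sigma-h_\sigma+C\,\mathrm{Id}$ becomes monotone by the absorption argument used for uniqueness.

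A simpler alternative is to show that $\{X^{n}\}$ is Cauchy in $L^2(\Omega;C([0,T];H^{-1}))$ directly from the uniqueness estimate, with $\varepsilon$ fixed. Since $-\Delta$ is diagonal in the basis $e_k$, the projection errors are controlled by the tails of $X$ in $H^{-1}$, which vanish. This gives strong convergence, which identifies $h_\sigma$ via the Lipschitz bound, and Minty's trick then identifies ${\cal D}_\sigma$.

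Finally we let $\varepsilon\to0$ using the same Cauchy argument: the $\varepsilon$-term contributes only $\varepsilon\int\scal{X^\varepsilon-X^{\varepsilon'},\cdot}$, which is $O(\varepsilon+\varepsilon')$ by the $L^2$ bounds. The limit $X_\sigma$ then satisfies \eqref{def-form} together with the regularity required in (i).
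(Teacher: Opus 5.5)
Your uniqueness argument is the paper's: the $H^{-1}$ energy identity for the difference, absorption of the $h_\sigma$-term via \eqref{hsigma} and $|b_\sigma^{-1}(a)-b_\sigma^{-1}(b)|\le|a-b|$, then Gronwall. For existence, the paper also uses a Galerkin scheme in the Dirichlet eigenbasis and an $H^{-1}$ Cauchy estimate between approximations (Proposition~\ref{P2}), but with no $\varepsilon$-regularization. The degenerate monotone term $\mathbb{E}\int_0^t\int_{\mathcal O}(\mathcal D_\sigma(X_m)-\mathcal D_\sigma(X_l))(X_m-X_l)$ alone already makes $\mathcal D_\sigma(X_m)$, $G_\sigma(X_m)$ and $h_\sigma(X_m)$ Cauchy in $L^2(\Omega\times\mathcal O\times(0,T))$ (Proposition~\ref{CS}). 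This follows from $|\mathcal D_\sigma(a)-\mathcal D_\sigma(b)|^2\le\max(d_1,d_2)(\mathcal D_\sigma(a)-\mathcal D_\sigma(b))(a-b)$ and \eqref{hsigma}, even though $X_m$ itself converges only weakly. Your $\varepsilon$ buys strong convergence of $X^n$ at the Galerkin level, but you pay for it with a second limit $\varepsilon\to0$ that is exactly this argument; you could run it directly on the Galerkin approximations with $\varepsilon=0$, as the paper does. Your diagonal-structure idea for the projection errors can, however, improve on the paper if made quantitative. One has $\|(I-P_m)f\|_{L^2}\le\gamma_{m+1}^{-1/2}\|\nabla f\|_{L^2}$ for $f\in H^1_0(\mathcal O)$ and $\|(I-P_m)Y\|_{H^{-1}}\le\gamma_{m+1}^{-1/2}\|Y\|_{L^2}$. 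Combined with the uniform bounds, the cross terms are then $O(\gamma_{m+1}^{-1/2})$ uniformly in $n>m$, whereas the paper goes through the weak limit $X_\sigma$ and an iterated limit $\lim_l\lim_m\eta_{ml}=0$. Your other option, Minty's trick for the whole drift with an $e^{-Ct}$ weight, is the standard Krylov--Rozovskii variational route with $V=L^2(\mathcal O)$, $H=H^{-1}(\mathcal O)$. Weak monotonicity there is exactly your uniqueness computation, and coercivity follows from $\mathcal D_\sigma(r)r\ge cr^2-C$. This route bypasses the Cauchy estimate entirely, and your $L^2$ It\^o bound then supplies the regularity $\mathcal D_\sigma(X_\sigma)\in L^2(\Omega\times(0,T);H^1_0(\mathcal O))$ that the definition requires. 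One step to tighten: after $\varepsilon\to0$ you lose strong $L^2$ convergence of $X^\varepsilon$, and $h_\sigma$ is not monotone, so neither the Lipschitz bound nor Minty's trick identifies the limit of $h_\sigma(X^\varepsilon)$. Use instead $h_\sigma=h\circ D^{-1}\circ\mathcal D_\sigma$ with $h\circ D^{-1}$ Lipschitz, so that strong convergence of $\mathcal D_\sigma(X^\varepsilon)$ transfers to $h_\sigma(X^\varepsilon)$. The same observation is what really justifies Proposition~\ref{CS}(iii) in the paper, where Minty's trick is invoked for $h_\sigma$.
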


\subsection{Proof of the existence}

In order to prove the existence of a solution of Problem $(P_\sigma)$, we
discretize the parabolic equation in Problem $(P_\sigma)$ by means of the
Galerkin method.\newline
\noindent Denote by $0< \gamma_1 < \gamma_2 \leq ... \leq \gamma_k \leq...$
the eigenvalues of the operator $-\Delta$ with homogeneous Dirichlet
boundary conditions, and by ${e}_{k}, {k} = 1,...$ the corresponding unit
eigenfunctions in $L^2( \mathcal{O})$.  Note that these are smooth functions and in particular, they belong to $H_0^1(\mathcal{O})$. Since the equation is well posed in $H^{-1}(\mathcal{O})$ we set $f_k=\gamma_k^{1/2}e_k$ so that $\{f_h\}$ is an orthonormal basis in $H^{-1}( \mathcal{O})$.

{In particular, we use the finite-dimensional projection operator 
\begin{align*}& P_m:L^2(\mathcal{O})\longrightarrow span\set{e_1,\ldots, e_m} = H_m\subset L^2(\mathcal{O}),\\
& P_m X=\sum_{k=1}^m\scal{X,e_k}_2e_k=\sum_{k=1}^m \left( \int_{\mathcal{O}}X(x)e_k(x)\, dx \right)  e_k,\qquad \mbox{ for all } X \in L^2(\mathcal{O}).
\end{align*}

Furthermore, we also project the Wiener process and define the finite-dimensional multidimensional Brownian motion\[W_m(\cdot):=P_m W(\cdot)=\sum_{k=1}^m\scal{W(\cdot),e_k}_2e_k=\sum_{k=1}^m\beta_k(\cdot)e_k.\]
To avoid tedious expressions, we will further consider the special case in which the trace operator $Q$ is linked to the Laplace one, for instance $Q=(-\Delta)^{-\theta}$, with $\theta>\frac{d+2}{2}$, which amounts to
\[\sqrt{Q}h=\sum_{k\ge 1}\sqrt{\lambda_k}\scal{h,e_k}_2 e_k,\ \lambda_k=\gamma_k^{-\theta}.\]
As a by-product, $Q$ and $\sqrt{Q}$ commute with $P_m$, and
\[P_m\pr{\sqrt{Q}W(t)}=\sqrt{Q}\pr{P_mW(t)}=\sum_{k=1}^m \sqrt{\lambda_k}\beta_k(t)e_k.\]

\noindent We look for a solution of the form 
\begin{equation*}
X_m(x, t) =\sum_{k=1}^m \left(\int_{\mathcal{O}} X_m(x,t) e_k(x) dx\right) {e}_k (x)= \sum_{k=1}^m X_{km}(t) {e}_k (x),
\end{equation*}
where $X_{km}(t)= \int_{\mathcal{O}} X_m(x,t) e_k(x) dx$. For each $m\in \mathbb{N}$ we consider the following integrated in time system of stochastic ordinary differential equations for the functions $X_{km}(t), \, k=1,2,...,m,$
\begin{equation}  \label{pbinm}
	\begin{split}
		\int_{\mathcal{O}} X_m(x,t) e_k(x) dx 
		=& \int_ \mathcal{O}%
		b^0_{\sigma m}(x) e_k(x) dx + \int_0^t 
		\dual{{{\Delta P_m {\cal D}_\sigma(X_m(s)),e_k}}} ds \\&+ 
		\int_0^t \int_{\mathcal{O}} P_m h_\sigma(X_m(x,s)) e_k(x)dxds+
		\sqrt{\lambda_k}\beta_k(t),
	\end{split}
\end{equation}
a.s. for all $t \in (0,T]$ and for all $k = 1, ..., m$,  where the initial function is defined by 
\begin{equation}  \label{pbinm0}
b^0_{\sigma m} = \sum_{k=1}^m (\int_\mathcal{O} ( b^0_{\sigma}(x) {e}_k(x)
dx) {e}_k.
\end{equation}
Note that a.s. for all  $t \in (0,T]$,  ${\cal D}_{\sigma} ( X_m(t)) \in H_0^1 (\mathcal O)$, so that   $\Delta  {\cal D}_{\sigma} ( X_m(t)) \in H^{-1}(\mathcal O)$, and that $P_m {\cal D}_\sigma(X_m(t))$  and 
$ \Delta P_m {\cal D}_\sigma(X_m(t))$ are smooth functions of the space variable. \\

Integration by parts in \eqref{pbinm} yields the equivalent integral equations
\begin{equation} \label{star} 
\begin{split}
\int_{\mathcal{O}} X_m(x,t) e_k(x) dx 
=& \int_ \mathcal{O}
b^0_{\sigma m}(x) e_k(x) dx + \int_0^t  \int_{\mathcal{O}}
		P_m {\cal D}_\sigma ( X_m (x,s)) \Delta e_k(x) dx  ds \\&+ 
\int_0^t \int_{\mathcal{O}} P_m h_\sigma(X_m(x,s)) e_k(x)dxds+
\sqrt{\lambda_k}\beta_k(t),
\end{split}
\end{equation}
a.s. for all  $t \in (0,T]$ and for all $k= 1, ..., m$.}

Problem (\ref{star}) coresponds to an initial value problem
for a system of $m$ ordinary stochastic differential equations with the
unknown functions $X_{km}(t)$, $k=1,..,m$, where the nonlinear functions 
 $ P_m  {\cal D}_\sigma$ and  $P_m h_\sigma$ are Lipschitz continuous functions 
 of $X_m$. 
It is easily seen that we are in the situation of Theorem 3.1.1 from \cite{PR} which implies that problem (\ref{star}) has a unique continuous strong solution. See also \cite{Karatzas} and \cite{DZ} for other classical results. \\

Before searching for a priori estimates, we recall the definition of the projection on the space 
$H^{-1}(\mathcal O)$ (cf. Prévôt-Röckner \cite{PR} Formula (4.2.20) p. 84), namely
\begin{equation*} 
	P_m A := A_m := \sum_{k=1}^m A_{k} {e}_k 
	=  \sum_{k=1}^m  \dual{ A, {e}_k} {e}_k, 
\end{equation*}
for all $A \in H^{-1} (\mathcal O)$, and remark that this notion of a projection operator extends the projection 
operator on $L^2(\mathcal O) $ presented above. \\

Finally we recall some useful properties of projection operators. We have that (see, e.g., \cite{EK}) 
\begin{equation*} 
	\int_{\cal O} (P_m A){(x)} X_m {(x)\, dx} = \int_{\cal O}  A X_m \mbox{ for all } A \in L^2(\cal O),
\end{equation*}
\begin{equation*} 
	\Vert P_m A \Vert_{{{L^2(\mathcal{O})}}} \leq \Vert A \Vert_{{{L^2(\mathcal{O})}}} \mbox{ for all } A \in L^2(\cal O),
\end{equation*}
and that 
\begin{equation}\label{conv}
	P_m a \to a ~~\mbox{in}~~ {L^2(\mathcal{O})} ~~\mbox{as}~~ m \to \infty.
\end{equation}

\medskip

\noindent Moreover, the following equalities hold 
\begin{lem} \label{Pmdelta}  
	We have that 
	
	\begin{enumerate}
		\item[(i)] ${\int_{\cal O} (P_m A)(x) X_m(x)\, dx = \dual{A, X_m}} \mbox{ for all } A \in H^{-1} (\cal O)$;
		
		\item[(ii)] $P_m \{\Delta A\}\ = \Delta  \{P_m A\}   \mbox{  for all  } A \in H^1(\cal O) $; 
		
		\item[(iii)] $ P_m  \{ {\Delta^{-1}} A\}  = { \Delta^{-1}}  \{ P_m A\} \mbox{  for all  } A \in L^2(\cal O)$; 
		
		\item[(iv)]  $ \int_{\cal O} \{P_m A\}(x) B(x)\, dx =  	\int_{\cal O} A(x) \{P_m B\}(x)\, dx  \mbox{  for all  } A, B \in L^2(\cal O)$. 
	\end{enumerate}
	
\end{lem}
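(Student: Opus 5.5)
The plan is to reduce all four identities to computations on the eigenbasis $\{e_k\}$ of the Dirichlet Laplacian, using the definition of $P_m$ on $H^{-1}(\mathcal O)$ recalled above: $P_m A=\sum_{k\le m}\dual{A,e_k}e_k$. For $A\in L^2(\mathcal O)$ this reduces to the $L^2$ projection, since $\dual{A,e_k}=\scal{A,e_k}_2$.

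For (i), I would expand $X_m=\sum_{k\le m}X_{km}e_k$ and use orthonormality in $L^2(\mathcal O)$. This gives
\[\int_{\mathcal O}(P_mA)X_m\,dx=\sum_{k\le m}\dual{A,e_k}X_{km}.\]
Linearity of the duality pairing in the second argument then identifies the right-hand side with $\dual{A,X_m}$, because $X_m$ is a finite combination of the $e_k\in H^1_0(\mathcal O)$. Identity (iv) is the same computation for $A,B\in L^2(\mathcal O)$: both sides equal $\sum_{k\le m}\scal{A,e_k}_2\scal{B,e_k}_2$.

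For (ii), I would compute the coefficients of $\Delta A\in H^{-1}(\mathcal O)$. By the definition of the distributional Laplacian and $\Delta e_k=-\gamma_k e_k$,
\[\dual{\Delta A,e_k}=\int_{\mathcal O}A\,\Delta e_k\,dx=-\gamma_k\scal{A,e_k}_2.\]
Hence $P_m\Delta A=\sum_{k\le m}(-\gamma_k)\scal{A,e_k}_2e_k=\Delta\sum_{k\le m}\scal{A,e_k}_2e_k=\Delta P_mA$. This is the main obstacle. The first equality above is literally $\langle \nabla A,\nabla e_k\rangle$ with no boundary term only when $A\in H^1_0(\mathcal O)$. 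For a general $A\in H^1(\mathcal O)$ one must interpret $\Delta A$ as the distribution tested against the smooth $e_k$, which vanish on $\partial\mathcal O$. I would state this interpretation explicitly; the later applications use $A=\mathcal D_\sigma(X_m)\in H^1_0(\mathcal O)$, where the issue disappears.

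For (iii), let $A\in L^2(\mathcal O)$ and let $\Delta^{-1}$ be the Dirichlet inverse, so that $\Delta^{-1}e_k=-\gamma_k^{-1}e_k$ by self-adjointness. Then $\scal{\Delta^{-1}A,e_k}_2=-\gamma_k^{-1}\scal{A,e_k}_2$, and the same termwise argument gives $P_m\Delta^{-1}A=\Delta^{-1}P_mA$. Alternatively, I would apply (ii) to $\Delta^{-1}A\in H^1_0(\mathcal O)$ and then invert $\Delta$ on $H_m$.
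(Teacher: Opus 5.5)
Your proposal is correct and follows the paper's appendix proof essentially line by line. Like the paper, you expand in the eigenbasis $\{e_k\}$ and use $L^2$-orthonormality for (i) and (iv), and you move $\Delta$ (resp.\ $\Delta^{-1}$) onto $e_k$ via $\Delta e_k=-\gamma_k e_k$ and self-adjointness for (ii) and (iii).

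Your caveat on (ii) is well founded and sharper than the paper, which integrates by parts without comment. With the $H^{-1}$ pairing, the boundary term $\int_{\partial\mathcal O}A\,\partial_\nu e_k$ survives for $A\in H^1(\mathcal O)\setminus H^1_0(\mathcal O)$. For example, $A\equiv 1$ gives $P_m\Delta A=0\neq\Delta P_mA$. So what matters is the trace of $A$, not the vanishing of $e_k$ on $\partial\mathcal O$. Restricting to $A\in H^1_0(\mathcal O)$, which covers every later use, is the right fix.
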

\noindent We refer to the Appendix for the proof of these results.\\

Next, we recall an Itô formula based on Da Prato and Zabczyk \cite{DZ}, p. 106. While it is meant for SPDEs, we apply it to the approximate solution, which we handle as if it was the solution of a SPDE.

\begin{lem} [Itô formula] \label{Itof}  
		Let $X$ be an $E$-valued function such that \[ X(t) = X(0) + \int_0^t h(s)\,ds + \int_0^t G(s)\,dW(s), \qquad 0\leq s\leq t, \] and suppose that $h$ is an $E$-valued predictable process Bochner integrable on $[0,T]$, a.s., $G$ is an $E$-valued process stochastically integrable. Suppose that the function \[ F:[0,T]\times E\to\mathbb{R} \] and its partial derivatives \[ \frac{\partial F}{\partial t}, \qquad \frac{\partial F}{\partial X}, \qquad \frac{\partial^2 F}{\partial X^2} \] are continuous on $[0,T]\times E$.
		Then for all $t\in[0,T]$, a.s., \[ \begin{aligned} F(t,X(t)) ={}&F(0,X(0)) +\int_0^t \frac{\partial F}{\partial t}(s,X(s))\,ds\\ &+\int_0^t \left\langle \frac{\partial F}{\partial X}(s,X(s)), h(s) \right\rangle_E\,ds\\ &+\int_0^t \left\langle \frac{\partial F}{\partial X}(s,X(s)), G(s)\,dW(s) \right\rangle_E\\ &+\frac12\int_0^t \operatorname{Tr} \left[ \frac{\partial^2F}{\partial X^2}(s,X(s)) \left(G(s)Q^{\frac12}\right) \left(G(s)Q^{\frac12}\right)^* \right]\,ds, \end{aligned} \tag{30} \]
where
	\[ \begin{aligned} \operatorname{Tr}\left[ \frac{\partial^2F}{\partial X^2}(s,X(s)) \left(G(s)Q^{\frac12}\right) \left(G(s)Q^{\frac12}\right)^* \right] &= \sum_{k=1}^{\infty} \left< \frac{\partial^2F}{\partial X^2} \left( s,X(s) \right) \left( G(s)Q^{\frac12}e_k \right) , \left( G(s)Q^{\frac12}e_k \right) \right>_{E}, 
	\end{aligned} \]
    and $E$ is a hilbert space. 
\end{lem}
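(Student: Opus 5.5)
The plan is to reduce the statement to the classical finite-dimensional Itô formula and then pass to the limit, following the strategy of Da Prato and Zabczyk. First I localize. Introduce the stopping times
\[\tau_N:=\inf\set{t\in[0,T]:\ \Vert X(t)\Vert_E+\int_0^t\Vert h(s)\Vert_E\,ds+\int_0^t\Vert G(s)Q^{\frac12}\Vert_{HS}^2\,ds\ \geq N}\wedge T .\]
On $[0,\tau_N]$ the process $X$ stays in a ball, but that ball is not compact since $E$ is infinite dimensional. So I will not use compactness; I will use uniform continuity of $F$ and its derivatives on compact subsets of $[0,T]\times E$, applied pathwise to the (a.s. continuous) trajectory $\set{X(s):s\in[0,t]}$, which is compact. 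Since $\tau_N\uparrow T$ a.s., it is enough to prove the formula with $t$ replaced by $t\wedge\tau_N$.

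Next I treat elementary integrands. Assume first that $h$ and $G$ are simple, i.e. constant in $(s,\omega)$ on the intervals of a partition $0=t_0<\dots<t_n=t$, with $\mathcal F_{t_j}$-measurable values and $G$ taking values in finite-rank operators. Then on each $[t_j,t_{j+1}]$ the increment of $X$ is $h_j(s-t_j)+G_j(W(s)-W(t_j))$, which lies in a finite-dimensional affine subspace, and $W$ enters only through finitely many coordinates $\beta_k$. There the statement is exactly the classical multidimensional Itô formula. The trace term reduces to $\sum_k\scal{\partial^2_XF\,G_jQ^{\frac12}e_k,G_jQ^{\frac12}e_k}_E$, because the quadratic covariation of $\beta_k$ and $\beta_l$ is $\delta_{kl}\,dt$. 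Summing over $j$ gives the formula. Alternatively, one can do this by hand: take a refining partition, write a second-order Taylor expansion $F(t_{j+1},X(t_{j+1}))-F(t_j,X(t_j))$, identify the first-order terms as Riemann sums for the $ds$ and stochastic integrals, and show that $\sum_j\scal{\partial_X^2F\,\Delta_jX,\Delta_jX}$ converges in probability to the trace term. The remainder is controlled by the modulus of continuity of $\partial_X^2F$ along the compact trajectory.

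For general $h$ and $G$ I approximate. Choose simple $h^n\to h$ in $L^1(0,T;E)$ a.s., and simple $G^n$ with $\int_0^T\Vert (G^n-G)Q^{\frac12}\Vert_{HS}^2ds\to0$ in probability; on $[0,\tau_N]$ both are possible by standard density results. The corresponding $X^n$ satisfy $\sup_{s\le t\wedge\tau_N}\Vert X^n(s)-X(s)\Vert_E\to0$ in probability, using the Burkholder–Davis–Gundy (or Doob) inequality for the stochastic part. Passing to the limit term by term, the $ds$-terms converge by dominated convergence, because $F$ and its derivatives are continuous and, after extracting an a.s. convergent subsequence, are bounded along the compact set formed by the trajectories. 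The stochastic integral converges in probability by the Itô isometry combined with localization.

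The main obstacle is the trace term: showing that
\[\operatorname{Tr}\!\left[\partial_X^2F(s,X^n)\,G^nQ^{\frac12}(G^nQ^{\frac12})^*\right]\to\operatorname{Tr}\!\left[\partial_X^2F(s,X)\,GQ^{\frac12}(GQ^{\frac12})^*\right]\]
in $L^1(0,t\wedge\tau_N)$ in probability. I would bound the difference by $\Vert\partial_X^2F(s,X^n)-\partial_X^2F(s,X)\Vert_{\mathcal L(E)}\Vert G^nQ^{\frac12}\Vert_{HS}^2$ plus $\Vert\partial^2_XF(s,X)\Vert_{\mathcal L(E)}\,\Vert (G^n-G)Q^{\frac12}\Vert_{HS}\left(\Vert G^nQ^{\frac12}\Vert_{HS}+\Vert GQ^{\frac12}\Vert_{HS}\right)$. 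Both pieces vanish, using the local uniform continuity of $\partial_X^2F$ and Cauchy–Schwarz in time. Finally, I let $N\to\infty$ to conclude.
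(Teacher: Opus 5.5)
Your proposal is correct. The paper gives no proof of this lemma; it only cites Da Prato--Zabczyk (p.~106). Your outline has the same architecture as the proof there: localization, reduction to elementary integrands, a Taylor expansion (or the finite-dimensional It\^o formula) on each interval, then approximation, with convergence of the drift, stochastic-integral and trace terms.

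The one substantive difference is how you control $F$ and its derivatives. Da Prato--Zabczyk assume $F$, $\partial_tF$, $\partial_XF$, $\partial_X^2F$ are uniformly continuous on bounded subsets of $[0,T]\times E$. Together with localization on $\Vert X\Vert_E$, that hypothesis supplies uniform bounds and moduli of continuity. The lemma as stated here assumes only continuity on $[0,T]\times E$, which in infinite dimensions gives neither bounds nor uniform continuity on balls. So the citation does not literally cover the statement. You instead work pathwise on compact sets:
\begin{itemize}
\item the trajectory $X([0,t])$;
\item the set of segments $[X(s),X(r)]$, for the Taylor remainder;
\item in the approximation step, the compact union $X([0,t])\cup\bigcup_nX^n([0,t])$ along an a.s.\ uniformly convergent subsequence.
\end{itemize}
This is exactly what is needed to prove the lemma under the stated hypothesis, so your argument is more faithful to the statement than the paper's reference.

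Two small points to tidy.
\begin{enumerate}
\item Finite rank of $G_j$ does not by itself make $W$ enter through finitely many $\beta_k$. Nothing breaks, though: $G_jQ^{\frac12}W$ is still a Brownian motion in a finite-dimensional space with covariance $G_jQG_j^*$. The $\mathcal F_{t_j}$-measurable base point $X(t_j)$ and the random subspace are handled by conditioning on $\mathcal F_{t_j}$, or by your Taylor route.
\item In the Taylor route, the second-moment argument for $\sum_j\langle\partial_X^2F(t_j,X(t_j))\Delta_jX,\Delta_jX\rangle$ needs $\partial_X^2F(t_j,X(t_j))$ to be bounded. Your $\tau_N$ does not give this under mere continuity. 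Either add $\sup_{s\le t}\Vert\partial_X^2F(s,X(s))\Vert_{\mathcal L(E)}$ to the stopping time, or truncate on $\{\Vert\partial_X^2F(t_j,X(t_j))\Vert_{\mathcal L(E)}\le N\}$ and let $N\to\infty$.
\end{enumerate}
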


In  what follows we prove the following a priori estimates.

\begin{proposition}
\label{estXm} Let $T > 0$ be arbitrary. There exist positive constants $C$
and $\tilde{C}$ {that may depend on the time horizon $T$, but can be chosen} independent of $m$ and $\sigma$ such that 
	\begin{align*}
	&\Vert {X}_m \Vert^2_{L^\infty(0,T; L^{2}(\Omega \times \mathcal{O}))} \leq C,\text{ and}
	\\[10pt]
	&\Vert 	{\cal D}_\sigma({X}_m) \Vert^2_{L^2 (\Omega \times (0,T); H^1_0(\mathcal{O}))}
	\leq \tilde{C}. & 
\end{align*}
\end{proposition}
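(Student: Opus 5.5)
We apply the Itô formula of Lemma \ref{Itof} to the finite-dimensional process $X_m$ with $E=H_m$ and $F(X)=\frac12\Vert X\Vert_{L^2(\mathcal{O})}^2$. Written in $H_m$, the Galerkin system \eqref{star} reads
\[
dX_m=\pp{\Delta P_m{\cal D}_\sigma(X_m)+P_m h_\sigma(X_m)}dt+\sqrt{Q}\,dW_m .
\]
Itô's formula then gives, a.s. for all $t\in[0,T]$,
\begin{align*}
\tfrac12\Vert X_m(t)\Vert_2^2 ={}& \tfrac12\Vert b^0_{\sigma m}\Vert_2^2+\int_0^t\scal{\Delta P_m{\cal D}_\sigma(X_m),X_m}_2\,ds+\int_0^t\scal{P_mh_\sigma(X_m),X_m}_2\,ds\\
&+\int_0^t\scal{X_m,\sqrt{Q}\,dW_m}_2+\tfrac12\sum_{k=1}^m\lambda_k t .
\end{align*}

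\textbf{The dissipative term.} By Lemma \ref{Pmdelta} (ii), (iv) and integration by parts, using $X_m\in H_m$, we get
\[
\scal{\Delta P_m{\cal D}_\sigma(X_m),X_m}_2=\dual{\Delta{\cal D}_\sigma(X_m),X_m}=-\int_{\mathcal{O}}{\cal D}_\sigma'(X_m)\,|\nabla X_m|^2 .
\]
This equals $-\int_{\mathcal{O}}|\nabla G_\sigma(X_m)|^2$ with $G_\sigma$ as in \eqref{defG}. It also dominates $\frac{1}{\max(d_1,d_2)}\int_{\mathcal{O}}|\nabla{\cal D}_\sigma(X_m)|^2$, because ${\cal D}_\sigma'\in\{0,d_1,d_2\}$ and $|\nabla{\cal D}_\sigma(X_m)|^2={\cal D}_\sigma'(X_m)^2|\nabla X_m|^2$ by the chain rule for Lipschitz functions.

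\textbf{The remaining terms.} Since $h(0)=0$ and $h$ is Lipschitz, \eqref{hyph1} gives $|h_\sigma(r)|\le \sqrt{M\min(d_1,d_2)}\,|b_\sigma^{-1}(r)|\le \sqrt{M\min(d_1,d_2)}\,|r|$. Hence $\scal{P_mh_\sigma(X_m),X_m}_2\le C\Vert X_m\Vert_2^2$, with $C$ independent of $\sigma$ and $m$. The Itô correction is bounded by $\frac12\operatorname{Tr}Q$, and $\Vert b^0_{\sigma m}\Vert_2\le\Vert b^0_\sigma\Vert_2\le|\mathcal{O}|^{1/2}$ because $|b^0_\sigma|\le1$. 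The stochastic integral is a genuine martingale: its quadratic variation is $\int_0^t\sum_{k\le m}\lambda_k\scal{X_m,e_k}^2ds$, and $X_m$ solves a finite-dimensional SDE with Lipschitz coefficients, so it has all moments. Taking expectations therefore removes this term.

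\textbf{Closing the estimate.} Taking expectations, we obtain
\[
\tfrac12 E\Vert X_m(t)\Vert_2^2+\frac{1}{\max(d_1,d_2)}E\int_0^t\Vert\nabla{\cal D}_\sigma(X_m)\Vert_2^2\,ds\le \tfrac12|\mathcal{O}|+\tfrac{T}{2}\operatorname{Tr}Q+C\int_0^tE\Vert X_m\Vert_2^2\,ds .
\]
Gronwall's lemma then yields $\sup_{t\le T}E\Vert X_m(t)\Vert_2^2\le C(T)$, which is the first bound. Substituting this back controls $E\int_0^T\Vert\nabla{\cal D}_\sigma(X_m)\Vert^2$. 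Since ${\cal D}_\sigma(X_m)\in H_0^1$ (because $X_m$ vanishes on $\partial\mathcal{O}$ and ${\cal D}_\sigma(0)=0$), Poincaré's inequality gives the full $H_0^1$ bound. All constants depend only on $d_1,d_2,M,|\mathcal{O}|,\operatorname{Tr}Q,T$, not on $m$ or $\sigma$.

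\textbf{Main obstacle.} The delicate step is the dissipative term. One must justify that the projection can be dropped against $X_m$ via Lemma \ref{Pmdelta}, and that the chain rule holds for the piecewise linear ${\cal D}_\sigma$ with its flat part on $[0,\sigma]$. The flat part is what makes the gradient bound hold only for ${\cal D}_\sigma(X_m)$ and not for $X_m$ itself. For the chain rule, we would use that $X_m$ is smooth in $x$ and the standard result for Lipschitz functions of $H^1$ functions: $\nabla X_m=0$ a.e. on level sets, so the kinks at $0$ and $\sigma$ cause no trouble.
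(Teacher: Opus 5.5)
Your proposal is correct and follows essentially the same route as the paper: Itô's formula for the squared $L^2$ norm of the Galerkin approximation, rewriting the dissipative term as $-\int_{\mathcal{O}}|\nabla G_\sigma(X_m)|^2$ and bounding $|\nabla{\cal D}_\sigma(X_m)|^2\le\max(d_1,d_2)\,|\nabla G_\sigma(X_m)|^2$, a linear bound on the $h_\sigma$ term, and Gronwall's lemma. Your direct estimate $|h_\sigma(r)|\le\sqrt{M\min(d_1,d_2)}\,|r|$ is cleaner than the paper's appeal to \eqref{hsigma} at that step. That inequality controls $|h_\sigma|^2$ rather than the product $h_\sigma(X_m)X_m$, so the paper needs an extra Young step there. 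Your explicit treatment of the martingale term and the use of Poincaré's inequality also fill in points the paper leaves implicit.
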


\noindent 
\begin{proof}
Applying Lemma \ref{Pmdelta} (ii) to the equations in the system \eqref{pbinm} yields 
\begin{equation} \label{pbinp}
	\begin{split}
		\int_{\mathcal{O}} X_m(x,t) e_k(x) dx 
		=& \int_ \mathcal{O}%
		b^0_{\sigma m}(x) e_k(x) dx + \int_0^t 
		\dual{{{ P_m \Delta{\cal D}_\sigma(X_m(s)),e_k}}} ds \\&+ 
		\int_0^t \int_{\mathcal{O}} P_m h_\sigma(X_m(x,s)) e_k(x)dxds+
		\sqrt{\lambda_k}\beta_k(t),
	\end{split}
\end{equation}
a.s. for all $t \in (0,T]$ and for all $k = 1, ..., m$.
{Multiplying each equation of \eqref{pbinp} by $e_k$ and summing for $k = 1, ...,m$, we obtain}
\begin{equation} \label{compact}
	\begin{split}
X_{m}(t) = &~ b^0_{\sigma m} { + \int_0^t P_m \Delta \mathcal{D}_\sigma (X_m(s)) ds\, }\\&+\int_0^t  P_m h_\sigma(X_m(s))ds
		+ {\sum_{k=1}^m\sqrt{\lambda_k}\beta_k(t)e_k}.
	\end{split} 
\end{equation}
a.s. for all $t \in (0,T]$.

Let us apply Ito's formula to  \eqref{compact} with { $F(X) = \Vert X \Vert _{L^2(\mathcal{O})}^2$, and then  take the expectation} to deduce that
\begin{equation*}
\begin{split}
	\mathbb{E}\int_ \mathcal{O}\displaystyle{ X^2_m(x,t)} dx \le &	\ \mathbb{E}\int_ \mathcal{O}(\displaystyle{ b^0_{\sigma m}(x)})^2 dx +2 \mathbb{E} \int_0^t \int_\mathcal{O}   P_m \Delta {\cal D}_{\sigma} ( X_m )   X_mdxds\\
    &+ 2\mathbb{E} \int_0^t \int_\mathcal{O} P_m h_\sigma(X_m) X_mdxds  + t\, \text{Tr}_{L^2}( Q),
\end{split}
\end{equation*}
where $\text{Tr}_{L^2}(Q)=\sum_{k=1}^\infty \langle Qe_k,e_k\rangle_{L^2(\mathcal{O})}=\sum_{k=1}^\infty \lambda_k=\sum_{k=1}^\infty \gamma_k^{-\theta}<\infty$. Thus
\begin{align*}
\mathbb{E}  \Vert X_m(t) \Vert^2_2 \leq  &\Vert b^0_{\sigma m} \Vert^2_{L^2} - 2\mathbb{E}  \int_0^t  \int_\mathcal{O} \mathcal \nabla {D}_{\sigma} ( X_m)  \nabla X_mdxds  + 2\mathbb{E}  \int_0^t \int_\mathcal{O} h_\sigma (X_m) X_mdxds + t \text{Tr}_{L^2}( Q)  \\ 
 =  &\Vert b^0_{\sigma m} \Vert^2_{L^2} - 2 \mathbb{E}\int_0^t  \int_\mathcal{O} \mathcal ( \sqrt {{D}'_{\sigma} (X_m) } \nabla X_m)^2dxds  + 2 \mathbb{E}\int_0^t \int_\mathcal{O} h_\sigma (X_m) X_mdxds + t \text{Tr}_{L^2}( Q)  \nonumber   \\
=& \Vert b^0_{\sigma m}\Vert^2_{L^2} - 2 \mathbb{E} \int_0^t  \int_\mathcal{O} \mathcal ( \nabla G_\sigma(X_m))^2dxds   + 2 \mathbb{E}\int_0^t \int_\mathcal{O} h_\sigma (X_m) X_mdxds + t \text{Tr}_{L^2}( Q) ,
\end{align*}
which in turn implies that
\begin{align}
&\mathbb{E} \Vert X_m(t) \Vert^2_2 + 2 \mathbb{E} \int_0^t  \int_\mathcal{O} \mathcal ( \nabla G_\sigma(X_m))^2dxds \nonumber  \\
&\leq  \Vert b^0_{\sigma m} \Vert^2_{L^2}    + 2 \mathbb{E}\int_0^t \int_\mathcal{O} h_\sigma (X_m) X_m dxds + t \text{Tr}_{L^2}( Q)  \nonumber \\
&= \Vert b^0_{\sigma m}   \Vert^2_{L^2}    + 2 \mathbb{E}\int_0^t \int_\mathcal{O} (h_\sigma(X_m) - h_\sigma(0)) X_mdxds + t \text{Tr}_{L^2}( Q)  \nonumber  \\
& \leq  \Vert b^0_{\sigma m} \Vert^2_{L^2}    + 2 M  \mathbb{E}\int_0^t \int_\mathcal{O} 
\vert {\cal D}_\sigma(X_m) - {\cal D}_\sigma(0) \vert   \vert b_\sigma^{-1}(X_m) - b_\sigma^{-1}(0) \vert dxds   + t \text{Tr}_{L^2}( Q)   \nonumber \\ 
& \leq  \Vert b^0_{\sigma m}\Vert^2_{L^2}    + 2 M \max(d_1,d_2)  \mathbb{E}\int_0^t \int_\mathcal{O} 
\vert  X_m\vert^2dxds + t \text{Tr}_{L^2}( Q)  \nonumber \\
& \leq C_1 + C_2 \mathbb{E}\int_0^t \int_\mathcal{O} 
\vert  X_m\vert^2{\,dxds}. \nonumber
\end{align} 
Applying Gronwall Lemma we obtain 
\begin{align}
&{ \Vert X_m \Vert^2_{L^{\infty}( 0,T; L^2(\Omega \times\mathcal{O}))} \leq  C_1 e^{C_2 T}, } \label{estonxm} \\[10pt]
&\mathbb{E} \Vert \nabla G_\sigma (X_m) \Vert^2_{L^2(0,T; L^2(\mathcal{O}))} \leq  C \label{estong}. 
\end{align}

From the last inequality \eqref{estong}, we deduce that 
\begin{align*}
&\mathbb{E} \int_0^t \int_{\mathcal{O}} (\nabla {\cal D}_\sigma (X_m) )^2 dx ds = \mathbb{E} \int_0^t \int_{\mathcal{O}} \vert  {\cal D}'_\sigma (X_m) )\vert  \vert \sqrt{{\cal D}'_\sigma (X_m) )} \nabla X_m \vert^2 dx ds \\
&= \mathbb{E} \int_0^t \int_{\mathcal{O}} \vert  {\cal D}'_\sigma (X_m) )\vert   \left(  \nabla(\int_0^{X_m} \sqrt{{\cal D}'_\sigma (r) } dr)  \right)^2 dx ds \\
&\leq \max(d_1,d_2) \mathbb{E} \int_0^t \int_{\mathcal{O}} \left( \nabla G_\sigma (X_m) \right)^2 dx dt \leq  \tilde{C},
\end{align*}
where $\tilde{C}=\tilde{C}(T)$. 
From \eqref{estonxm}, we can deduce that there exists a subsequence of $\{X_m\}$ which we denote again by $\{X_m\}$ and a function $X_\sigma$ such that 
\begin{align} \label{convxm}
    X_m\rightharpoonup X_\sigma ~~\mbox{weakly in}~~ L^2(\Omega \times \mathcal{O}  \times (0,T)  ) \mbox{ as}~ m \to \infty.
\end{align}
\end{proof}   
We now have to determine the limits of the nonlinear terms ${{\cal D}_\sigma}(X_m)$
and ${h_\sigma}(X_m)$ as $m \rightarrow \infty$. To that purpose, we will
prove that they strongly converge to their limits in $L^2(\Omega \times \mathcal{O} \times (0,T))$ as $m \rightarrow \infty$; in turn this will permit to identify their limits.\\
To begin with, let us recall an Itô formula in the space ${H^{-1}} (\mathcal{O})$.
\begin{lem}[It{\^{o}} formula in ${H^{-1}} (\mathcal{O})$]
\label{lem:Ito_formula}  Let $U:[0,T]\times \Omega \rightarrow {H^{-1}} (\mathcal{O})$ be a stochastic process such that  
\begin{equation*}
U(t) = U(0) + \int_0^t \Phi(s)\ ds + \int_0^t \Psi(s) dW(s) \quad\text{a.s. in } {H^{-1}} (\mathcal{O}), 
\end{equation*}
where $\Phi \in L^{1}([0,T]; L^{1}(\Omega ; H^{-1} (\mathcal{O}))$  and $\Psi \in L^{1}([0,T]; L^{1}(\Omega ; L_{2}(H^{-1} (\mathcal{O})))$ where $L_{2}(H^{-1} (\mathcal{O})))$ is the space of Hilbert-Schmidt operators from $H^{-1} (\mathcal{O}))$. Let $F$
be a twice Fr\'echet differentiable real-valued functional on ${H^{-1}} (\mathcal{O})$.
Then, the following holds :  
\begin{equation*}
F(U(t)) = F(U(0)) + \int_0^t <{DF ((U(s)),\Phi(s)}>_{-1} ds + \int_0^t <{DF  (U(s)), \Psi(s)}dW(s) >_{-1} 
\end{equation*}
\begin{equation*}
+\frac{1}{2} \int_0^t \text{ Tr }[\Psi^{*}(s)D^2 F(U(s)) \Psi (s) ]ds
\end{equation*}
where $<., .>_{-1}$ is the scalar product in ${H^{-1}} (\mathcal{O})$.
\end{lem}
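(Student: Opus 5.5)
This is the Itô formula in the Hilbert space $H^{-1}(\mathcal{O})$, and I would deduce it from Lemma \ref{Itof}. That lemma holds on an arbitrary Hilbert space $E$, so I take $E=H^{-1}(\mathcal{O})$ with the scalar product $\scal{\cdot,\cdot}_{-1}$ recalled in Section 2. The process $U$ has the semimartingale form required there. The drift $\Phi$ is $H^{-1}$-valued and Bochner integrable a.s. by the assumption $\Phi\in L^1(0,T;L^1(\Omega;H^{-1}(\mathcal{O})))$. The diffusion coefficient $\Psi$ is stochastically integrable because it has the Hilbert--Schmidt integrability stated. The functional $F$ is taken independent of time, so the $\partial F/\partial t$ term vanishes.

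The first step is to rewrite the derivative terms through the Riesz representation in $H^{-1}(\mathcal{O})$. The Fréchet derivative $DF(U)$ is identified with an element of $H^{-1}(\mathcal{O})$ via $\scal{\cdot,\cdot}_{-1}$, which turns $\scal{\partial_X F,\Phi}_E$ into $\scal{DF(U),\Phi}_{-1}$ and the stochastic integral into $\scal{DF(U),\Psi\,dW}_{-1}$. Similarly $D^2F(U)$ is identified with a bounded symmetric operator on $H^{-1}(\mathcal{O})$.

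The second step handles the trace term. Summing over the orthonormal basis $\{f_k\}$ of $H^{-1}(\mathcal{O})$ introduced in Section 3, the trace in Lemma \ref{Itof} becomes $\sum_k\scal{D^2F(U)\Psi f_k,\Psi f_k}_{-1}=\operatorname{Tr}[\Psi^*D^2F(U)\Psi]$. Here the covariance is absorbed into $\Psi$, i.e. one regards $\Psi=\sqrt{Q}$ composed with the embedding, so that $\Psi$ is Hilbert--Schmidt.

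The main obstacle is regularity. Lemma \ref{Itof} asks that $F$ and its first two derivatives be continuous, while the statement only assumes twice Fréchet differentiable. I would therefore add, implicitly, that $D^2F$ is continuous and bounded on bounded sets. This holds for the intended use $F(U)=\|U\|_{-1}^2$, where $DF(U)=2U$ and $D^2F=2I$. If full generality is wanted, the fallback is to localize with stopping times $\tau_R=\inf\{t:\|U(t)\|_{-1}>R\}$ and apply the formula on $[0,t\wedge\tau_R]$. Letting $R\to\infty$ then uses the continuity of paths and the integrability of $\Phi$ and $\Psi$, via dominated convergence for the $ds$ integrals and convergence in probability for the stochastic integral.
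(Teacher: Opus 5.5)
Your proposal is correct and follows the paper's route: the paper gives no argument of its own and only cites the standard Hilbert-space It\^o formula (\cite{BDPR}, p.~10), which is exactly Lemma~\ref{Itof} with $E=H^{-1}(\mathcal{O})$ and the Riesz identification of $DF$ and $D^2F$ that you use. Two corrections, though. First, the continuity of $D^2F$ that you add is a necessary repair of the statement, not something localization can remove: for $F(u)=\phi(\scal{u,f_1}_{-1})$ with $\phi'(x)=x^2\sin(x^{-2})$, $F$ is twice Fr\'echet differentiable, yet $\int_0^t|\phi''(\beta(s))|\,ds=\infty$ a.s.\ for a standard Brownian motion $\beta$, and $\tau_R$ only controls growth. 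Second, stochastic integrability of $\Psi$ does not follow from the stated $L^1$-in-time condition; it needs $\int_0^T\|\Psi(s)\|_{L_2}^2\,ds<\infty$ a.s., which is also what makes the trace term finite, so it must be read as implicit in the assumption that $\int_0^t\Psi\,dW$ exists.
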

\noindent(See e.g. \cite{BDPR} page 10.)

{For every $m>l$ we define}
\begin{equation}  \label{Vml}
V_{ml}(x,t) = (- \Delta)^{-1}(X_m(x,t) - X_l (x,t)),
\end{equation}
for all $(x,t) \in \mathcal{O} \times (0,T)$.

\begin{proposition}\label{P2}
{There exists a sequence $(\eta_{ml})_{m\ge l}\subset \mathbb{R}_+$, such that \[\lim_{l\rightarrow\infty}\lim_{m\rightarrow\infty}\eta_{ml}=0,\]and\[\mathbb{E} \int_\mathcal{O} \vert \nabla V_{ml}(t) \vert^2dx +
\mathbb{E} \int_0^t \int_\mathcal{O} ( {\cal D}_{\sigma} ( X_m) - {\cal D}_{\sigma} (
X_l )) ( X_m - X_l)dxds\leq\eta_{ml},\ \forall 0\le l\le m.\]}
\end{proposition}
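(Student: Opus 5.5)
We apply the Itô formula in $H^{-1}(\mathcal{O})$ (Lemma \ref{lem:Ito_formula}) to $F(U)=\Vert U\Vert_{-1}^2$ with $U=X_m-X_l$, so that $F(U(t))=\int_{\mathcal{O}}\vert\nabla V_{ml}(t)\vert^2dx$. Subtracting the two versions of \eqref{compact} gives
\[
X_m(t)-X_l(t)=b^0_{\sigma m}-b^0_{\sigma l}+\int_0^t\pr{P_m\Delta\mathcal{D}_\sigma(X_m)-P_l\Delta\mathcal{D}_\sigma(X_l)}ds+\int_0^t\pr{P_mh_\sigma(X_m)-P_lh_\sigma(X_l)}ds+\sum_{k=l+1}^m\sqrt{\lambda_k}\beta_k(t)e_k .
\]
The Itô correction term is then $t\sum_{k=l+1}^m\lambda_k\Vert e_k\Vert_{-1}^2=t\sum_{k=l+1}^m\lambda_k\gamma_k^{-1}$. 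It is the tail of a convergent series, so it tends to $0$. The initial term $\Vert b^0_{\sigma m}-b^0_{\sigma l}\Vert_{-1}^2\le C\Vert b^0_{\sigma m}-b^0_{\sigma l}\Vert_2^2$ also tends to $0$ by \eqref{conv}. After taking expectations the martingale term disappears.

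For the diffusion term we write $P_m=P_l+(P_m-P_l)$ and use Lemma \ref{Pmdelta}(i)--(iii). This turns the $H^{-1}$ pairing into $L^2$ pairings:
\[
2\scal{P_m\Delta\mathcal{D}_\sigma(X_m)-P_l\Delta\mathcal{D}_\sigma(X_l),X_m-X_l}_{-1}
=-2\int_{\mathcal{O}}\pr{\mathcal{D}_\sigma(X_m)-\mathcal{D}_\sigma(X_l)}(X_m-X_l)\,dx+R_{ml}.
\]
The remainder $R_{ml}$ collects the commutator terms. Using the self-adjointness in Lemma \ref{Pmdelta}(iv) and $P_lX_m\neq X_m$, these are of the form
\[
\int_{\mathcal{O}}(I-P_l)\mathcal{D}_\sigma(X_l)\,X_m\,dx,\qquad \int_{\mathcal{O}}\mathcal{D}_\sigma(X_m)\,(I-P_l)X_m\,dx .
\]
This step is the main obstacle. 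To bound them we use Proposition \ref{estXm}: $\mathcal{D}_\sigma(X_j)$ is bounded in $L^2(\Omega\times(0,T);H^1_0)$ uniformly in $j$. Since $\Vert(I-P_l)f\Vert_{L^2}\le\gamma_{l+1}^{-1/2}\Vert f\Vert_{H^1_0}$, Cauchy--Schwarz together with the $L^2$ bound on $X_m$ gives
\[
\mathbb{E}\int_0^t\vert R_{ml}\vert\,ds\le C\gamma_{l+1}^{-1/2}\to0 .
\]
The bound depends only on $l$, uniformly in $m$. Since $\mathcal{D}_\sigma$ is monotone, the main term is nonnegative, and we keep it on the left-hand side.

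For the reaction term we bound $2\scal{P_mh_\sigma(X_m)-P_lh_\sigma(X_l),X_m-X_l}_{-1}$ by splitting it as $P_l(h_\sigma(X_m)-h_\sigma(X_l))$ plus $(P_m-P_l)h_\sigma(X_m)$. The second piece has $H^{-1}$ norm at most $\gamma_{l+1}^{-1/2}\Vert h_\sigma(X_m)\Vert_2$, so it is again $O(\gamma_{l+1}^{-1/2})$. For the first piece we write the pairing as $\int P_l(h_\sigma(X_m)-h_\sigma(X_l))\,V_{ml}\,dx$ and apply Young's inequality with \eqref{hsigma}. Since $b_\sigma^{-1}$ is Lipschitz with constant $1$ and $\mathcal{D}_\sigma$ is monotone,
\[
\vert h_\sigma(X_m)-h_\sigma(X_l)\vert^2\le M\pr{\mathcal{D}_\sigma(X_m)-\mathcal{D}_\sigma(X_l)}(X_m-X_l).
\]
The first piece is therefore bounded by
\[
\tfrac12\pr{\mathcal{D}_\sigma(X_m)-\mathcal{D}_\sigma(X_l)}(X_m-X_l)+C\Vert V_{ml}\Vert_2^2,
\]
and Poincaré gives $\Vert V_{ml}\Vert_2^2\le C\Vert\nabla V_{ml}\Vert_2^2$. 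The half of the monotone term is absorbed by the left-hand side.

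Combining these bounds, we arrive at
\[
y(t)+\mathbb{E}\int_0^t\int_{\mathcal{O}}\pr{\mathcal{D}_\sigma(X_m)-\mathcal{D}_\sigma(X_l)}(X_m-X_l)\le\epsilon_{ml}+C\int_0^ty(s)\,ds,
\]
where $y(t)=\mathbb{E}\Vert\nabla V_{ml}(t)\Vert_2^2$ and $\epsilon_{ml}$ collects the initial-data, trace-tail and commutator terms. Gronwall's lemma then gives the statement with $\eta_{ml}=C\epsilon_{ml}e^{CT}$, up to harmless constants in front of the monotone term. The iterated limit $\lim_l\lim_m\eta_{ml}=0$ holds because every piece of $\epsilon_{ml}$ is controlled either by $\gamma_{l+1}^{-1/2}$, by tails $\sum_{k>l}$, or by $\Vert b^0_\sigma-b^0_{\sigma l}\Vert_2$.
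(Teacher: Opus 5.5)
Your proof is correct, but it disposes of the commutator terms differently from the paper. The paper reaches the same It\^o inequality with the same cross terms, namely $\int_{\mathcal{O}}(I-P_l)\mathcal{D}_\sigma(X_l)\,X_m\,dx$ and $\int_{\mathcal{O}}(I-P_l)h_\sigma(X_l)\,(-\Delta)^{-1}X_m\,dx$. It keeps them inside $\eta_{ml}$ and removes them by a limiting argument. For fixed $l$ these terms are linear in $X_m$, so the weak convergence $X_m\rightharpoonup X_\sigma$ gives their limit as $m\to\infty$. The paper then moves $(I-P_l)$ onto $X_\sigma$ and concludes from $\Vert(I-P_l)X_\sigma\Vert_{L^2}\to 0$ together with plain $L^2$ bounds on $\mathcal{D}_\sigma(X_l)$ and $h_\sigma(X_l)$.

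You instead let $(I-P_l)$ act on $\mathcal{D}_\sigma(X_l)$. You combine the uniform $L^2(\Omega\times(0,T);H^1_0(\mathcal{O}))$ bound of Proposition \ref{estXm} with the spectral-gap inequality $\Vert(I-P_l)f\Vert_{L^2}\le\gamma_{l+1}^{-1/2}\Vert f\Vert_{H^1_0}$. For the reaction cross term you use $\Vert(P_m-P_l)g\Vert_{H^{-1}}\le\gamma_{l+1}^{-1/2}\Vert g\Vert_{L^2}$.

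What your route buys:
\begin{itemize}
\item The bound is uniform in $m\ge l$ and has an explicit rate, so $\lim_{l\to\infty}\sup_{m\ge l}\eta_{ml}=0$. This is stronger than the iterated limit and is exactly the Cauchy property used in Proposition \ref{CS}.
\item It never uses the weak limit $X_\sigma$. In the paper that limit exists only along a subsequence, so the paper's $\lim_m$ is implicitly taken along it.
\end{itemize}
In exchange, the paper's route needs only $L^2$-level control of the nonlinearities in the cross terms, while yours relies on the gradient estimate, which is available anyway.

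Two small remarks. First, the exact computation reduces the diffusion part to the single commutator $-2\int_{\mathcal{O}}(I-P_l)\mathcal{D}_\sigma(X_l)\,X_m\,dx$, so your second listed form is superfluous; it would be bounded the same way in any case. Second, your It\^o correction $t\sum_{l<k\le m}\lambda_k\gamma_k^{-1}$ differs from the paper's $t\sum_{l<k\le m}\gamma_k^{-\theta}$. Both are tails of convergent series, so nothing changes.
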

\begin{proof}
We apply Lemma \ref{lem:Ito_formula} to the difference of the equations for $X_m$ and $X_l$, namely
\begin{equation*} \label{pbinm2}
	\begin{split}
 \displaystyle{( X_m - X_l)(x,t)}  = \displaystyle{b^0_{\sigma m} -   b^0_{\sigma l}} +  \int_0^t  ( P_m \Delta {\cal D}_{\sigma} ( X_m) - P_l \Delta {\cal D}_{\sigma} ( X_l) ) \\
 + (P_m h_\sigma(X_m) - P_l h_\sigma(X_l)))ds  + { (P_m (\sqrt{Q}W(t)) - P_l (\sqrt{Q}W(t)))},
\end{split}
\end{equation*}
which yields, by setting $F(X) = 	\Vert X	\Vert ^2_{{H^{-1}}(\cal O)}$, 
\begin{equation*}
\begin{split}
	&\Vert (X_m - X_l)(t)\Vert ^2_{{H^{-1}}(\cal O)}\le  \Vert  \displaystyle{b^0_{\sigma m} -   b^0_{\sigma l}}\Vert ^2_{{H^{-1}}(\cal O)}\\ &\qquad +2 \int_0^t <P_m \Delta {\cal D}_{\sigma} ( X_m) -P_l  \Delta {\cal D}_{\sigma} ( X_l)  + P_m h_\sigma(X_m) - P_l h_\sigma(X_l), X_m - X_l>_{-1}ds \\
    &\qquad +2 \int_0^t <X_m - X_l,P_m(\sqrt{Q}dW(s))-P_l(\sqrt{Q}dW(s))>_{-1}\,ds  + t \, { \text{Tr}_{H^{-1}}( (P_m -P_l) (Q))}.
\end{split}
\end{equation*}
Here $$\text{Tr}_{H^{-1}}( Q)=\sum_{k=1}^\infty <Qf_k,f_k>_{H^{-1}}=\sum_{k=1}^\infty <(-\Delta)^{-\theta} \gamma_k^{1/2}e_k,\gamma_k^{1/2}e_k>_{H^{-1}}=\sum_{k=1}^\infty \gamma_k^{-\theta}< \infty$$
and $$\text{Tr}_{H^{-1}}( (P_m -P_l) (Q))=\sum_{l< k\le m} \gamma_k^{-\theta}.$$

In view of \eqref{Vml}, we deduce the following inequality
\begin{eqnarray}
&&\mathbb{E}\int_{\mathcal{O}}\left\vert \nabla V_{ml}\left( t\right)
\right\vert ^{2}dx+\underset{=T_{1}}{\underbrace{2\mathbb{E}%
\int_{0}^{t}\int_{\mathcal{O}}\left( P_{m}D_{\sigma }\left( X_{m}\right)
-P_{l}D_{\sigma }\left( X_{l}\right) \right) \left( X_{m}-X_{l}\right) dxds}}
\label{v_ml1} \\
&\leq &\int_{\mathcal{O}}\left\vert \nabla V_{ml}\left( 0\right) \right\vert
^{2}dx+t \, {\sum_{l< k\le m} \gamma_k^{-\theta}} +\underset{=T_{2}}{\underbrace{2\mathbb{E}\int_{0}^{t}\int_{\mathcal{O}%
}\left( P_{m}h_{\sigma }\left( X_{m}\right) -P_{l}h_{\sigma }\left(
X_{l}\right) \right) V_{ml}dxds}}  \notag
\end{eqnarray}

Next we transform the terms $T_1$ and $T_2$ as follows.
\begin{eqnarray*}
T_{1} &=&2\mathbb{E}\int_{0}^{t}\int_{\mathcal{O}}\left( P_{m}D_{\sigma
}\left( X_{m}\right) -P_{l}D_{\sigma }\left( X_{l}\right) \right) \left(
X_{m}-X_{l}\right) dxds \\
&=&2\mathbb{E}\int_{0}^{t}\int_{\mathcal{O}}\left( P_{m}D_{\sigma }\left(
X_{m}\right) -P_{m}D_{\sigma }\left( X_{l}\right) \right) \left(
X_{m}-X_{l}\right) dxds \\
&&+2\mathbb{E}\int_{0}^{t}\int_{\mathcal{O}}\left( P_{m}D_{\sigma }\left(
X_{l}\right) -P_{l}D_{\sigma }\left( X_{l}\right) \right) \left(
X_{m}-X_{l}\right) dxds \\
&=&2\mathbb{E}\int_{0}^{t}\int_{\mathcal{O}}\left( D_{\sigma }\left(
X_{m}\right) -D_{\sigma }\left( X_{l}\right) \right) P_{m}\left(
X_{m}-X_{l}\right) dxds \\
&&+2\mathbb{E}\int_{0}^{t}\int_{\mathcal{O}}\left( P_{m}-P_{l}\right) \left(
D_{\sigma }\left( X_{l}\right) \right) \left( X_{m}-X_{l}\right) dxds.
\end{eqnarray*}

Note that we have the following properties%
\begin{equation*}
\left( P_{m}-P_{l}\right) \left( X_{l}\right) =0\text{ and }\left(
P_{m}-P_{l}\right) \left( X_{m}\right) =\left( I-P_{l}\right) \left(
X_{m}\right) .
\end{equation*}

Going back to the previous relation, we have that%
\begin{eqnarray*}
T_{1} &=&2\mathbb{E}\int_{0}^{t}\int_{\mathcal{O}}\left( D_{\sigma }\left(
X_{m}\right) -D_{\sigma }\left( X_{l}\right) \right) \left(
X_{m}-X_{l}\right) dxds \\
&&+2\mathbb{E}\int_{0}^{t}\int_{\mathcal{O}}D_{\sigma }\left( X_{l}\right)
\left( \left( P_{m}-P_{l}\right) X_{m}-\left( P_{m}-P_{l}\right)
X_{l}\right) dxds \\
&=&2\mathbb{E}\int_{0}^{t}\int_{\mathcal{O}}\left( D_{\sigma }\left(
X_{m}\right) -D_{\sigma }\left( X_{l}\right) \right) \left(
X_{m}-X_{l}\right) dxds \\
&&+2\mathbb{E}\int_{0}^{t}\int_{\mathcal{O}}D_{\sigma }\left( X_{l}\right)
\left( I-P_{l}\right) \left( X_{m}\right) dxds \\
&=&2\mathbb{E}\int_{0}^{t}\int_{\mathcal{O}}\left( D_{\sigma }\left(
X_{m}\right) -D_{\sigma }\left( X_{l}\right) \right) \left(
X_{m}-X_{l}\right) dxds \\
&&+2\mathbb{E}\int_{0}^{t}\int_{\mathcal{O}}\left( I-P_{l}\right) \left(
D_{\sigma }\left( X_{l}\right) \right) \left( X_{m}\right) dxds.
\end{eqnarray*}

Following the same idea, we have
\begin{eqnarray*}
T_{2} &=&2\mathbb{E}\int_{0}^{t}\int_{\mathcal{O}}\left( P_{m}h_{\sigma
}\left( X_{m}\right) -P_{l}h_{\sigma }\left( X_{l}\right) \right) \left(
-\Delta \right) ^{-1}\left( X_{m}-X_{l}\right) dxds \\
&=&2\mathbb{E}\int_{0}^{t}\int_{\mathcal{O}}\left( P_{m}h_{\sigma }\left(
X_{m}\right) -P_{m}h_{\sigma }\left( X_{l}\right) \right) \left( -\Delta
\right) ^{-1}\left( X_{m}-X_{l}\right) dxds \\
&&+2\mathbb{E}\int_{0}^{t}\int_{\mathcal{O}}\left( P_{m}h_{\sigma }\left(
X_{l}\right) -P_{l}h_{\sigma }\left( X_{l}\right) \right) \left( -\Delta
\right) ^{-1}\left( X_{m}-X_{l}\right) dxds \\
&=&2\mathbb{E}\int_{0}^{t}\int_{\mathcal{O}}\left( h_{\sigma }\left(
X_{m}\right) -h_{\sigma }\left( X_{l}\right) \right) \left( -\Delta \right)
^{-1}P_{m}\left( X_{m}-X_{l}\right) dxds \\
&&+2\mathbb{E}\int_{0}^{t}\int_{\mathcal{O}}h_{\sigma }\left( X_{l}\right)
\left( -\Delta \right) ^{-1}\left( P_{m}-P_{l}\right) \left(
X_{m}-X_{l}\right) dxds \\
&=&2\mathbb{E}\int_{0}^{t}\int_{\mathcal{O}}\left( h_{\sigma }\left(
X_{m}\right) -h_{\sigma }\left( X_{l}\right) \right) \left( -\Delta \right)
^{-1}\left( X_{m}-X_{l}\right) dxds \\
&&+2\mathbb{E}\int_{0}^{t}\int_{\mathcal{O}}\left( I-P_{l}\right) \left(
h_{\sigma }\left( X_{l}\right) \right) \left( -\Delta \right) ^{-1}\left(
X_{m}\right) dxds \\
&\leq &\frac{1}{\alpha }\mathbb{E}\int_{0}^{t}\int_{\mathcal{O}}\left\vert
h_{\sigma }\left( X_{m}\right) -h_{\sigma }\left( X_{l}\right) \right\vert
^{2}dxds+\alpha \mathbb{E}\int_{0}^{t}\int_{\mathcal{O}}\left\vert
V_{ml}\right\vert ^{2}dxds \\
&&+2\mathbb{E}\int_{0}^{t}\int_{\mathcal{O}}\left( I-P_{l}\right) \left(
h_{\sigma }\left( X_{l}\right) \right) \left( -\Delta \right) ^{-1}\left(
X_{m}\right) dxds.
\end{eqnarray*}

Going back to (\ref{v_ml1}), we obtain 
\begin{eqnarray*}
&&\mathbb{E}\int_{\mathcal{O}}\left\vert \nabla V_{ml}\left( t\right)
\right\vert ^{2}dx+2\mathbb{E}\int_{0}^{t}\int_{\mathcal{O}}\left( D_{\sigma
}\left( X_{m}\right) -D_{\sigma }\left( X_{l}\right) \right) \left(
X_{m}-X_{l}\right) dxds \\
&&+2\mathbb{E}\int_{0}^{t}\int_{\mathcal{O}}\left( I-P_{l}\right) \left(
D_{\sigma }\left( X_{l}\right) \right) \left( X_{m}\right) dxds \\
&\leq &\int_{\mathcal{O}}\left\vert \nabla V_{ml}\left( 0\right) \right\vert
^{2}dx+t\, {\sum_{l< k\le m} \gamma_k^{-\theta}} \\
&&+\frac{1}{\alpha }\mathbb{E}\int_{0}^{t}\int_{\mathcal{O}}\left\vert
h_{\sigma }\left( X_{m}\right) -h_{\sigma }\left( X_{l}\right) \right\vert
^{2}dxds+\alpha \mathbb{E}\int_{0}^{t}\int_{\mathcal{O}}\left\vert
V_{ml}\right\vert ^{2}dxds \\
&&+2\mathbb{E}\int_{0}^{t}\int_{\mathcal{O}}\left( I-P_{l}\right) \left(
h_{\sigma }\left( X_{l}\right) \right) \left( -\Delta \right) ^{-1}\left(
X_{m}\right) dxds.
\end{eqnarray*}

Keeping in mind the assumptions on $h$ we have that 
\begin{eqnarray*}
\frac{1}{\alpha }\mathbb{E}\int_{0}^{t}\int_{\mathcal{O}}\left\vert
h_{\sigma }\left( X_{m}\right) -h_{\sigma }\left( X_{l}\right) \right\vert
^{2}dxds \leq \frac{MC}{\alpha }\mathbb{E}\int_{0}^{t}\int_{\mathcal{O}}\left\vert
D_{\sigma }\left( X_{m}\right) -D_{\sigma }\left( X_{l}\right) \right\vert
\left\vert X_{m}-X_{l}\right\vert dxds.
\end{eqnarray*}

Since $D_{\sigma }$ is monotonically increasing and choosing $\alpha$ such that $\alpha \geq MC$ obtain
\begin{eqnarray*}
&&\mathbb{E}\int_{\mathcal{O}}\left\vert \nabla V_{ml}\left( t\right)
\right\vert ^{2}dx+\mathbb{E}\int_{0}^{t}\int_{\mathcal{O}}\left( D_{\sigma
}\left( X_{m}\right) -D_{\sigma }\left( X_{l}\right) \right) \left(
X_{m}-X_{l}\right) dxds \\
&\leq &\int_{\mathcal{O}}\left\vert \nabla V_{ml}\left( 0\right) \right\vert
^{2}dx+t \, {\sum_{l< k\le m} \gamma_k^{-\theta}} \\
&&+\alpha \mathbb{E}\int_{0}^{t}\int_{\mathcal{O}}\left\vert
V_{ml}\right\vert ^{2}dxds+2\mathbb{E}\int_{0}^{t}\int_{\mathcal{O}}\left(
I-P_{l}\right) \left( h_{\sigma }\left( X_{l}\right) \right) \left( -\Delta
\right) ^{-1}\left( X_{m}\right) dxds \\
&&+2\mathbb{E}\int_{0}^{t}\int_{\mathcal{O}}\left( P_{l}-I\right) \left(
D_{\sigma }\left( X_{l}\right) \right) \left( X_{m}\right) dxds \\
&\leq &\int_{\mathcal{O}}\left\vert \nabla V_{ml}\left( 0\right) \right\vert
^{2}dx+t\, {\sum_{l< k\le m} \gamma_k^{-\theta}} \\
&&+\alpha C\mathbb{E}\int_{0}^{t}\int_{\mathcal{O}}\left\vert \nabla
V_{ml}\right\vert ^{2}dxds+2\mathbb{E}\int_{0}^{t}\int_{\mathcal{O}}\left(
I-P_{l}\right) \left( h_{\sigma }\left( X_{l}\right) \right) \left( -\Delta
\right) ^{-1}\left( X_{m}\right) dxds \\
&&+2\mathbb{E}\int_{0}^{t}\int_{\mathcal{O}}\left( P_{l}-I\right) \left(
D_{\sigma }\left( X_{l}\right) \right) \left( X_{m}\right) dxds.
\end{eqnarray*}

We set
\begin{eqnarray*}
\theta _{l,m}\left( s\right) &=&\int_{\mathcal{O}}\left( P_{l}-I\right)
\left( D_{\sigma }\left( X_{l}\left( s\right) \right) \right) X_{m}\left(
s\right) dx \\
&&+\int_{\mathcal{O}}\left( I-P_{l}\right) \left( h_{\sigma }\left(
X_{l}\left( s\right) \right) \right) \left( -\Delta \right) ^{-1}\left(
X_{m}\left( s\right) \right) dx
\end{eqnarray*}

and we apply the Gronwall inegality in 
\begin{eqnarray*}
&&\mathbb{E}\int_{\mathcal{O}}\left\vert \nabla V_{ml}\left( t\right)
\right\vert ^{2}dx+\mathbb{E}\int_{0}^{t}\int_{\mathcal{O}}\left( D_{\sigma
}\left( X_{m}\right) -D_{\sigma }\left( X_{l}\right) \right) \left(
X_{m}-X_{l}\right) dxds \\
&\leq &\int_{\mathcal{O}}\left\vert \nabla V_{ml}\left( 0\right) \right\vert
^{2}dx+t\, {\sum_{l< k\le m} \gamma_k^{-\theta}} +\int_{0}^{t}\alpha C \mathbb{E}\int_{\mathcal{O}}\left\vert \nabla V_{ml}\right\vert ^{2}dxds+\int_{0}^{t}2\mathbb{E}\theta _{l,m}\left(
s\right) ds
\end{eqnarray*}
to obtain 
\begin{eqnarray*}
&&\mathbb{E}\int_{\mathcal{O}}\left\vert \nabla V_{ml}\left( t\right)
\right\vert ^{2}dx+\mathbb{E}\int_{0}^{t}\int_{\mathcal{O}}\left( D_{\sigma
}\left( X_{m}\right) -D_{\sigma }\left( X_{l}\right) \right) \left(
X_{m}-X_{l}\right) dxds   \\
&\leq &{e^{\alpha C
T}\pr{2\mathbb{E}\int_{0}^{t}\theta _{l,m}\left( s\right) ds+\left\{ \int_{\mathcal{O}}\left\vert \nabla V_{ml}\left( 0\right)
\right\vert ^{2}dx+T\, {\sum_{l< k\le m} \gamma_k^{-\theta}}\right\}}} .  
\end{eqnarray*}

Finally if we define
\begin{eqnarray*}
\theta _{l} &=&\int_{\mathcal{O}}\left( P_{l}-I\right) \left( D_{\sigma
}\left( X_{l}\right) \right) X_\sigma dx \\
&&+\int_{\mathcal{O}}\left( I-P_{l}\right) \left( h_{\sigma }\left(
X_{l}\right) \right) \left( -\Delta \right) ^{-1}\left( X_\sigma \right) dx.
\end{eqnarray*}

For each fixed $l$, by Proposition 1 and the weak convergence of $\left\{ X_{m}\right\} $ in $%
L^{2}\left( \left( 0,T\right) \times \Omega \times \mathcal{O}\right) $ we have that%
\begin{equation*}
\underset{m\rightarrow \infty }{\lim }\mathbb{E}\int_{0}^{t}\theta
_{l,m}\left( s\right) ds=\mathbb{E}\int_{0}^{t}\theta _{l}\left( s\right) ds.
\end{equation*}
We also have that
\begin{eqnarray*}
\underset{m\rightarrow \infty }{\lim }e^{\alpha CT}\left\{ \int_{\mathcal{O}%
}\left\vert \nabla V_{ml}\left( 0\right) \right\vert ^{2}dx+T\, {\sum_{l< k\le m} \gamma_k^{-\theta}}\right\} \leq & C(T) \left\{\int_{\mathcal{O}}\left\vert b_{\sigma }^{0}-b_{\sigma
l}^{0}\right\vert ^{2}dx+{\sum_{l< k} \gamma_k^{-\theta}}\right\}.
\end{eqnarray*}
where $C(T)$ is some positive constant which only depends on $T$. We note that
\begin{eqnarray*}
\mathbb{E}\int_{0}^{t}\theta _{l}\left( s\right) ds &=&\mathbb{E}
\int_{0}^{t}\int_{\mathcal{O}}D_{\sigma }\left( X_{l}\right) \left(
P_{l}-I\right) X_\sigma dxds \\
&&+\mathbb{E}\int_{0}^{t}\int_{\mathcal{O}}h_{\sigma }\left( X_{l}\right)
\left( -\Delta \right) ^{-1}\left( I-P_{l}\right) \left( X_\sigma \right) dxds,
\end{eqnarray*}
and 
\begin{eqnarray*}
\left\vert \mathbb{E}\int_{0}^{t}\theta _{l}\left( s\right) ds\right\vert
&\leq &\left\vert \mathbb{E}\int_{0}^{t}\int_{\mathcal{O}}D_{\sigma }\left(
X_{l}\right) \left( P_{l}-I\right) X_\sigma dxds\right\vert \\
&&+\left\vert \mathbb{E}\int_{0}^{t}\int_{\mathcal{O}}h_{\sigma }\left(
X_{l}\right) \left( -\Delta \right) ^{-1}\left( I-P_{l}\right) \left(
X_\sigma \right) dxds\right\vert \\
&\leq &\mathbb{E}\int_{0}^{t}<D_{\sigma }\left(
X_{l}\right), \left( P_{l}-I\right) X_\sigma >_{L^{2}\left( \mathcal{O}\right) } ds \\
&&+\mathbb{E}\int_{0}^{t}<h_{\sigma }\left(
X_{l}\right), \left( I-P_{l}\right) \left(
X_\sigma \right) > _{H^{-1}\left( \mathcal{O}\right) }ds\\
&\leq &\mathbb{E}\int_{0}^{t}\left\Vert D_{\sigma }\left( X_{l}\right)
\right\Vert _{L^{2}\left( \mathcal{O}\right) }\left\Vert \left(
P_{l}-I\right) X_\sigma \right\Vert _{L^{2}\left( \mathcal{O}\right) }ds \\
&&+c\, \mathbb{E}\int_{0}^{t}\left\Vert h_{\sigma }\left( X_{l}\right)
\right\Vert _{H^{-1}\left( \mathcal{O}\right) }\left\Vert \left(
I-P_{l}\right) \left( X_\sigma \right) \right\Vert _{L^{2}\left( \mathcal{O}\right)
}ds
\end{eqnarray*}
where $c$ is a positive constant. Since 
\begin{equation*}
\underset{l\rightarrow \infty }{\lim }\left( I-P_{l}\right) \left( X_\sigma \right)
=0\text{ in }L^{2}\left( \left( 0,T\right) ;L^{2}\left( \Omega ;L^{2}\left( 
\mathcal{O}\right) \right) \right)
\end{equation*}%
we have that $\underset{l\rightarrow \infty } {\lim } \left\vert \mathbb{E} \displaystyle{\int_{0}^{t}} \theta
_{l}(s) ds\right\vert =0$.

Next we set $$\eta_{ml}= {e^{\alpha C
T}\pr{2\mathbb{E}\int_{0}^{t}\theta _{l,m}\left( s\right) ds+\left\{ \int_{\mathcal{O}}\left\vert \nabla V_{ml}\left( 0\right)
\right\vert ^{2}dx+T\, {\sum_{l< j\le m} \gamma_j^{-\theta}}\right\}}} .$$
We have proved above that $$\lim_{m \to \infty} \eta_{ml}  \leq   C(T) \left\vert \mathbb{E} \displaystyle{\int_{0}^{t}} \theta
_{l}(s) ds\right\vert  + C(T) \left\{\displaystyle{\int_{\mathcal{O}}}\left\vert b_{\sigma }^{0}-b_{\sigma
l}^{0}\right\vert ^{2}dx+{\displaystyle{\sum_{j=l+1}^{+\infty} \gamma_j^{-\theta}}}\right\}, $$
so that $$\displaystyle{\lim_{l\to \infty } \lim_{m \to \infty }} \eta_{ml} =0. $$

This completes the proof of Proposition \ref{P2}.
\end{proof}
\begin{proposition} \label{CS}
We have that,
\begin{enumerate}
 \item[(i)] $ G_\sigma(X_{m}) \rightarrow G_\sigma(X_\sigma)$ strongly in $L^2(\Omega \times \mathcal{O} \times (0,T) )$ as $m \rightarrow \infty$.
 
 \item[(ii)] ${\cal D}_{\sigma} (X_m) \rightarrow {\cal D}_\sigma(X_\sigma)$ strongly in $L^2(\Omega \times \mathcal{O} \times (0,T) )$ as $m \rightarrow \infty$.
 
 \item[(iii)] $h_\sigma (X_m) \rightarrow h_\sigma(X_\sigma)$ strongly in $L^2(\Omega \times \mathcal{O} \times (0,T) )$ as $m \rightarrow \infty$.

\end{enumerate} 
\end{proposition}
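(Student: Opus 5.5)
The plan is to derive all three statements from the Cauchy-type estimate of Proposition \ref{P2}, combined with elementary pointwise inequalities for the piecewise linear functions $\mathcal{D}_\sigma$, $G_\sigma$ and $h_\sigma$.

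First I would check that $\mathcal{D}_\sigma$ is nondecreasing and Lipschitz with constant $\max(d_1,d_2)$. This gives the pointwise bound
\[
|\mathcal{D}_\sigma(a)-\mathcal{D}_\sigma(b)|^2\le \max(d_1,d_2)\,(\mathcal{D}_\sigma(a)-\mathcal{D}_\sigma(b))(a-b).
\]
I would also check the analogous bound for $G_\sigma$. Here $G_\sigma'=\sqrt{\mathcal{D}_\sigma'}$ equals $\sqrt{d_2}$, $0$ or $\sqrt{d_1}$ on the same three intervals, so $G_\sigma(r)=\mathcal{D}_\sigma(r)/\sqrt{d_i}$ on the relevant branches. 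Consequently $|G_\sigma(a)-G_\sigma(b)|\le \max(d_1^{-1/2},d_2^{-1/2})\,|\mathcal{D}_\sigma(a)-\mathcal{D}_\sigma(b)|$. This last inequality follows by splitting into cases according to the signs of $a$ and $b-\sigma$, or simply by noting that $G_\sigma=\phi\circ\mathcal{D}_\sigma$ with $\phi(y)=y/\sqrt{d_2}$ for $y<0$ and $\phi(y)=y/\sqrt{d_1}$ for $y\ge 0$, which is Lipschitz. Finally, \eqref{hsigma} gives $|h_\sigma(a)-h_\sigma(b)|^2\le M|\mathcal{D}_\sigma(a)-\mathcal{D}_\sigma(b)|\,|a-b|$, since $|b_\sigma^{-1}(a)-b_\sigma^{-1}(b)|\le|a-b|$ and the product term has a sign.

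Next I would integrate these bounds over $\Omega\times\mathcal{O}\times(0,T)$ and use Proposition \ref{P2} with $t=T$. This yields
\[
\mathbb{E}\int_0^T\!\!\int_{\mathcal{O}}\bigl(|\mathcal{D}_\sigma(X_m)-\mathcal{D}_\sigma(X_l)|^2+|G_\sigma(X_m)-G_\sigma(X_l)|^2+|h_\sigma(X_m)-h_\sigma(X_l)|^2\bigr)\,dx\,ds\le C\eta_{ml},
\]
with $C$ depending only on $d_1$, $d_2$ and $M$. Because $\lim_{l}\lim_{m}\eta_{ml}=0$, the three sequences are Cauchy in $L^2(\Omega\times\mathcal{O}\times(0,T))$. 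Strictly speaking, the iterated limit gives Cauchy-ness as follows: for each $\varepsilon>0$ there is $l_0$ such that for every $l\ge l_0$ one has $\eta_{ml}<\varepsilon$ for all $m$ large enough (depending on $l$). From this one would extract a Cauchy subsequence, or argue with a $\limsup$ in $m$. Either way, the subsequence of $\{X_m\}$ already fixed in \eqref{convxm} can be refined, and the sequences $\mathcal{D}_\sigma(X_m)$, $G_\sigma(X_m)$ and $h_\sigma(X_m)$ converge strongly to some limits $\chi$, $g$ and $k$.

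The main obstacle is identifying these limits as $\mathcal{D}_\sigma(X_\sigma)$, $G_\sigma(X_\sigma)$ and $h_\sigma(X_\sigma)$. We only know that $X_m\rightharpoonup X_\sigma$ weakly, and $\mathcal{D}_\sigma$ is degenerate on $[0,\sigma]$, so $X_m$ itself need not converge strongly. I would use the standard Minty/monotonicity trick. The operator $\mathcal{D}_\sigma$ is maximal monotone on $L^2(\Omega\times\mathcal{O}\times(0,T))$ because it is a continuous nondecreasing Lipschitz function. Since $\mathcal{D}_\sigma(X_m)\to\chi$ strongly and $X_m\rightharpoonup X_\sigma$ weakly, the products converge:
\[
\mathbb{E}\int\!\!\int \mathcal{D}_\sigma(X_m)X_m\to\mathbb{E}\int\!\!\int \chi X_\sigma.
\]
Monotonicity then gives $\mathbb{E}\int\!\!\int(\chi-\mathcal{D}_\sigma(v))(X_\sigma-v)\ge0$ for all $v$, hence $\chi=\mathcal{D}_\sigma(X_\sigma)$ by Minty's lemma (take $v=X_\sigma-\lambda w$ and let $\lambda\downarrow 0$).

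For $G_\sigma$ and $h_\sigma$ I would use that both factor through $\mathcal{D}_\sigma$. Indeed, $G_\sigma=\phi\circ\mathcal{D}_\sigma$ with $\phi$ Lipschitz, as above. Similarly $h_\sigma=h\circ\psi\circ\mathcal{D}_\sigma$, where $\psi(y)=y/d_2$ for $y<0$ and $\psi(y)=y/d_1$ for $y\ge0$. This holds because $b_\sigma^{-1}(r)=\psi(\mathcal{D}_\sigma(r))$, which one checks on each of the three intervals, using $h(0)=0$ on $[0,\sigma]$. Strong convergence of $\mathcal{D}_\sigma(X_m)$ to $\mathcal{D}_\sigma(X_\sigma)$ therefore transfers directly through these Lipschitz maps, which gives (i) and (iii). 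In fact, this observation also makes the separate Cauchy argument for $G_\sigma$ and $h_\sigma$ unnecessary.
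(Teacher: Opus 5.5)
Your proposal is correct, but it organizes the argument differently from the paper. The paper treats the three sequences in parallel. It shows $G_\sigma(X_m)$ is Cauchy via $|G_\sigma(a)-G_\sigma(b)|^2\le(\mathcal{D}_\sigma(a)-\mathcal{D}_\sigma(b))(a-b)$, which is Cauchy--Schwarz applied to $\int_b^a\sqrt{\mathcal{D}_\sigma'}$. It passes to $\mathcal{D}_\sigma(X_m)$ through $|\mathcal{D}_\sigma(a)-\mathcal{D}_\sigma(b)|^2\le\max(d_1,d_2)|G_\sigma(a)-G_\sigma(b)|^2$, and handles $h_\sigma(X_m)$ with \eqref{hsigma}. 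It then identifies each of the three strong limits by a separate appeal to Theorem~\ref{Minty's trick}.

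You run the Cauchy-plus-Minty argument only once, for the monotone Lipschitz map $\mathcal{D}_\sigma$. You then get (i) and (iii) directly from the exact factorizations $G_\sigma=\phi\circ\mathcal{D}_\sigma$ and $h_\sigma=h\circ\psi\circ\mathcal{D}_\sigma$ with $\phi,\psi$ Lipschitz; the second is just $b_\sigma^{-1}=D^{-1}\circ\mathcal{D}_\sigma$.

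Besides being shorter, this makes (iii) rigorous. Theorem~\ref{Minty's trick} requires a maximal monotone map, and $h_\sigma$ is in general not monotone (consider the logistic-type $h$ of Section~5). So the paper's identification of the limit of $h_\sigma(X_l)$ as $h_\sigma(X_\sigma)$ is not covered by the result it cites. Your composition argument uses no monotonicity of $h$ at all.

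You are also more careful than the paper about the fact that Proposition~\ref{P2} only gives an iterated limit. No extra extraction is needed there, though. Choose $l_0$ with $\eta_{ml_0}<\varepsilon$ for all large $m$. The triangle inequality through $\mathcal{D}_\sigma(X_{l_0})$ then gives $\|\mathcal{D}_\sigma(X_m)-\mathcal{D}_\sigma(X_{m'})\|^2\le 4C\varepsilon$ for all large $m,m'$. Hence the whole sequence from \eqref{convxm} is Cauchy.
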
 

\begin{proof}

We deduce from Proposition  \ref{P2} that there exists a sequence of positive numbers $\{\eta_{ml}\}$ such that 
\begin{equation*}
\mathbb{E}\int_{0}^{t}\int_{\mathcal{O}}\left(
D_{\sigma }\left( X_{m}\right) -D_{\sigma }\left( X_{l}\right) \right)
\left( X_{m}-X_{l}\right) dxds \leq \eta_{ml}
\end{equation*}
where 
$$
\eta_{ml} \rightarrow 0 \mbox{  as  } l, m  \rightarrow \infty \mbox{   for all   } m \geq l.
$$
First we prove i). In view of the definition of $G_\sigma$ \eqref{defG} we deduce that 
 \begin{align*}
\nonumber \mathbb{E} \int_0
^T \int_\mathcal{O}  (G_\sigma(X_{m}) - G_\sigma(X_{l}))^2 dxdt  =  \mathbb{E} \int_0
^T \int_\mathcal{O} ( \int_{X_{l}}^{{X_{m}}} \sqrt{{\cal D}_\sigma'(s)} ds )^2dx dt\\ 
 \nonumber  \leq\ \mathbb{E}\int_0
^T \int_\mathcal{O} ( \int_{X_{l}}^ {X_{m} } {\cal D}_\sigma'(s) ds ) ( \int_{X_{l} }^{X_{m}} 1 ds)dxdt \\
 \leq \mathbb{E} \int_0
^T \int_\mathcal{O} ( {\cal D}_\sigma(X_{m}) - {\cal D}_\sigma(X_{l} )  ( X_m - X_l ) dx dt.
 \end{align*}
 Thus 
\begin{equation*}
 \Vert G_\sigma(X_m ) - G_\sigma(X_l )\Vert^2_{L^2(\Omega \times \mathcal{O} \times (0,T))} 
 \leq \eta_{ml},
\end{equation*}
so that $G_\sigma(X_l )$ is a Cauchy sequence in $L^2(\Omega \times \mathcal{O} \times (0,T))$. Therefore, 
$$
G_\sigma(X_l ) \rightarrow \chi \mbox{   strongly in   } {L^2}(\Omega \times \mathcal{O} \times (0,T))\mbox{   as   } l \rightarrow \infty.
$$
Moreover, we recall that by \eqref{convxm} 
\begin{align*} 
	X_l\rightharpoonup X_\sigma ~~\mbox{weakly in}~~ L^2(\Omega \times \mathcal{O}  \times (0,T)  ) \mbox{as}~ l \to \infty. 
\end{align*}
Theorem \ref{Minty's trick} below then implies that 
\begin{equation*}
\chi = {G_\sigma}(X_\sigma).
\end{equation*}
Next, we prove $(ii)$.
Indeed, 
\begin{equation}\label{cauchyD}
\begin{array}{lll}
\nonumber \mathbb{E}\int_0
^T \int_\mathcal{O}  \vert {\cal D}_\sigma(X_m ) -  {\cal D}_\sigma(X_l) \vert^2dxdt &=& \mathbb{E}\int_0
^T \int_\mathcal{O} \vert \int_{X_l }^ {X_m } {\cal D}_\sigma'(r) d r\vert^2dxdt \\[10pt]
&\leq& \nonumber {\Vert {\cal D}_\sigma' \Vert}_{\infty} \mathbb{E} \int_0
^T \int_\mathcal{O}  \vert \int_ {X_l }^ {X_m } \sqrt{{\cal D}_\sigma'(r)} d r\vert^2dxdt \\[10pt]
& \leq & \nonumber  \max({d_1}, d_2)  ~ \mathbb{E} \int_0
^T \int_\mathcal{O}  \vert G_{\sigma}(X_l) - G_{\sigma}(X_m) \vert ^2dxdt\\[10pt]
& < &  max(d_1, d_2) \eta_{ml}.
 \end{array}
\end{equation}
As in the proof of (i), applying Proposition \ref{estXm} and Theorem  \ref{Minty's trick}  below, we deduce  that ${\cal D}_{\sigma}(X_l)$ strongly converges to its limit ${{\cal D}_\sigma}(X_\sigma)$ in   $L^2(\Omega \times \mathcal{O} \times (0,T) )$ as $l$ tends to infinity. \\

Finally we prove iii).
We deduce from \eqref{hsigma} that  
\begin{equation*} 
 \vert h_\sigma({X_m}) - h_\sigma({X_l }) \vert^2 \leq M \vert {\cal D}_\sigma(X_m) - {\cal D}_\sigma(X_l)\vert . \vert {b_\sigma^{-1} (X_m)} - { b_\sigma^{-1}(X_l)}\vert.
 \end{equation*}
  First we prove that $ \mathbb{E} \Vert h_\sigma (X_m) \Vert_{L^2(0,T;L^2(\cal O))} $    is bounded.
Indeed
  \begin{align*} 
	\nonumber 
	\mathbb{E}\int_0^T \int_\mathcal{O}  \vert h_\sigma({ X_m}) \vert^2dxdt = 
	\mathbb{E}\int_0^T \int_\mathcal{O}  \vert h_\sigma({ X_m}) - h_\sigma(0) \vert^2dxdt  \\ 
	\leq M \mathbb{E} \int_0^T \int_\mathcal{O} ({\cal D}_\sigma(X_m) - {\cal D}_\sigma(0)) ({b_\sigma^{-1} (X_m)} - { b_\sigma^{-1}(0)}) dxdt  \leq \mathbb{E} \int_0^T \int_\mathcal{O}  {{\cal D}_\sigma(X_m)} (X_m) dxdt \\
	\nonumber \leq\{\mathbb{E} \int_0^T \int_{\mathcal{O}}  {{\cal D}_\sigma(X_m)}^2 dxdt\}^\frac{1}{2}
	\{\mathbb{E} \int_0^T \int_\mathcal{O} (X_m)^2 dxdt\}^\frac{1}{2} \leq C {\tilde C},
\end{align*}
where we have used Proposition \ref{estXm}. 
  \begin{align*}
\nonumber 
\mathbb{E}\int_0^T \int_\mathcal{O}  \vert h_\sigma({ X_m}) - h_\sigma({X_l}) \vert^2dxdt  & \leq M \mathbb{E} \int_0
^T \int_\mathcal{O} \vert {\cal D}_\sigma(X_m) - {\cal D}_\sigma(X_l)\vert . \vert {b_\sigma^{-1} (X_m)} - { b_\sigma^{-1}(X_l)}\vert dxdt  \\
 \nonumber& \leq M \mathbb{E} \int_0
^T \int_\mathcal{O} ( {\cal D}_\sigma(X_m ) - {\cal D}_\sigma( X_l)) ( X_m- X_l )dxdt\\
& \leq M \eta_{ml}.
 \end{align*}
As in the proof of (i), applying Proposition \ref{estXm} and Theorem  \ref{Minty's trick}  below, we deduce  that $h_{\sigma}(X_l)$ strongly converges to its limit ${h_\sigma}(X_\sigma)$ in   $L^2(\Omega \times \mathcal{O} \times (0,T) )$ as $l$ tends to infinity. \\
\end{proof}

As we have seen above, Theorem \ref{Minty's trick} below is very useful to identify limit functions.

\begin{theorem} \label{Minty's trick}
Let $\Psi$ be a maximal monotone operator from $\mathbb{R}$ to $\mathbb{R}$
 with the properties  
\begin{enumerate} 
\item The sequence $U^{\varepsilon }$  weakly converges  to $U$ in  $L^2( \Omega \times \mathcal{O} \times (0,T) )$ as $\varepsilon$ tends to zero.
\item $\Psi(U^\varepsilon)$ strongly converges to $\Theta$ in $L^2(\Omega \times \mathcal{O} \times (0,T) )$ as $\varepsilon $ tends to zero. 
\end{enumerate}
Then $\Theta =\Psi(U)$. 
\end{theorem}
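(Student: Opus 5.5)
The proof will use the classical Minty monotonicity trick. Two facts are needed beforehand.

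First, $\Psi$ is maximal monotone as a set-valued graph on $\mathbb{R}$. Hence it induces a maximal monotone operator $\widehat\Psi$ on the Hilbert space $H:=L^2(\Omega\times\mathcal{O}\times(0,T))$ by
\[
\Theta\in\widehat\Psi(U)\iff \Theta(\omega,x,t)\in\Psi(U(\omega,x,t))\ \text{a.e.}
\]
Monotonicity of $\widehat\Psi$ is immediate, by integrating the pointwise inequality. Maximality follows from the pointwise resolvent $(I+\Psi)^{-1}$. This resolvent is a $1$-Lipschitz contraction on $\mathbb{R}$, so composing it with any $F\in H$ gives a measurable function in $H$. That composition solves $U+\widehat\Psi(U)\ni F$, so $I+\widehat\Psi$ is onto, which is Minty's characterization of maximality.

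Second, we need an interpretation of $\Psi(U^\varepsilon)$. In the applications ($G_\sigma$, ${\cal D}_\sigma$, $h_\sigma$) the map is single valued, but the proof works with any selection $\Theta^\varepsilon\in\Psi(U^\varepsilon)$ converging strongly to $\Theta$.

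Main step. Fix an arbitrary pair $(V,\xi)\in H\times H$ with $\xi\in\widehat\Psi(V)$. Monotonicity of $\widehat\Psi$ gives
\[
\mathbb{E}\int_0^T\int_{\mathcal O}(\Theta^\varepsilon-\xi)(U^\varepsilon-V)\,dx\,dt\ \ge 0 .
\]
We pass to the limit $\varepsilon\to0$. The product $\Theta^\varepsilon U^\varepsilon$ is a strong--weak pairing in $H$, so its integral converges to that of $\Theta U$. The terms $\Theta^\varepsilon V$ and $\xi U^\varepsilon$ converge by strong, respectively weak, convergence. Here the boundedness of $U^\varepsilon$ in $H$ is automatic from weak convergence. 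This is the one point where the hypotheses "strong" and "weak" must be matched correctly, and it is the only delicate step. We obtain
\[
\mathbb{E}\int_0^T\int_{\mathcal O}(\Theta-\xi)(U-V)\,dx\,dt\ge0\qquad\text{for all }(V,\xi)\in\operatorname{graph}\widehat\Psi .
\]

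Conclusion. Maximality of $\widehat\Psi$ now yields $\Theta\in\widehat\Psi(U)$. Concretely, choose $V=(I+\widehat\Psi)^{-1}(U+\Theta)$ and $\xi=U+\Theta-V\in\widehat\Psi(V)$. Then $\Theta-\xi=V-U$, so the inequality becomes $-\|U-V\|_H^2\ge0$. Hence $V=U$ and $\Theta=\xi\in\widehat\Psi(U)$, that is, $\Theta=\Psi(U)$ almost everywhere. When $\Psi$ is single valued, as for $G_\sigma$, ${\cal D}_\sigma$ and $h_\sigma$ above, this is exactly the claimed identity.

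One caveat remains for $h_\sigma$, which is not monotone in general. There, in Proposition \ref{CS}(iii), the identification should instead go through the already proven strong convergence of ${\cal D}_\sigma(X_m)$. The map $b_\sigma^{-1}$ is a Lipschitz function of ${\cal D}_\sigma$, since $b_\sigma^{-1}(r)={\cal D}_\sigma(r)/d_2$ for $r<0$ and $b_\sigma^{-1}(r)={\cal D}_\sigma(r)/d_1$ for $r>0$. Hence $h_\sigma(X_m)=h(b_\sigma^{-1}(X_m))$ converges strongly to $h(b_\sigma^{-1}(X_\sigma))=h_\sigma(X_\sigma)$. I would note this in passing, but the theorem itself only requires the monotone argument above.
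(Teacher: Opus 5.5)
Your argument is correct and is essentially the proof behind the paper's citation of Barbu (Proposition 2.1). There, the $L^2(\Omega\times\mathcal{O}\times(0,T))$ realization of $\Psi$ is maximal monotone, and maximal monotone graphs are closed under (weak, strong) limits: one passes to the limit in the monotonicity inequality and concludes by maximality, which you do concretely via the resolvent; you also make explicit the maximality of the $L^2$ realization, which the paper leaves implicit. Your side remark is also right and worth keeping: $h_\sigma$ is not monotone in general, so Proposition \ref{CS}(iii) cannot invoke this theorem as stated, and writing $b_\sigma^{-1}$ as a Lipschitz function of $\mathcal{D}_\sigma$, so that $h_\sigma(X_m)\to h_\sigma(X_\sigma)$ follows from the strong convergence of $\mathcal{D}_\sigma(X_m)$, is a valid repair.
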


For the proof of this result, see Proposition 2.1 p.29 in \cite{BarbuNonlin},  or \cite{BrezisMM}.\\

\noindent Next let us prove the following Lemma which will be useful for passing to the limit as $m \to \infty$
\begin{lem}\label{projconv} There holds 
\begin{enumerate}
 \item[(i)] $P_m {\cal D}_{\sigma} (X_m) \rightarrow {\cal D}_\sigma(X_\sigma)$ strongly in $L^2(\Omega \times \mathcal{O} \times (0,T) )$ as $m \rightarrow \infty$.
 
 \item[(ii)] $P_m h_\sigma (X_m) \rightarrow h_\sigma(X_\sigma)$ strongly in $L^2(\Omega \times \mathcal{O} \times (0,T) )$ as $m \rightarrow \infty$.
 \end{enumerate}
\end{lem}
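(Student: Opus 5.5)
We would prove both items by the same triangle-inequality splitting, using Proposition \ref{CS} for the nonlinear part and the approximation property \eqref{conv} of the projections for the linear part. For (i), we write
\begin{equation*}
P_m {\cal D}_\sigma(X_m) - {\cal D}_\sigma(X_\sigma) = P_m\bigl({\cal D}_\sigma(X_m) - {\cal D}_\sigma(X_\sigma)\bigr) + \bigl(P_m {\cal D}_\sigma(X_\sigma) - {\cal D}_\sigma(X_\sigma)\bigr).
\end{equation*}
Since $\Vert P_m A\Vert_{L^2(\mathcal{O})}\le \Vert A\Vert_{L^2(\mathcal{O})}$ pointwise in $(\omega,t)$, integrating over $\Omega\times(0,T)$ bounds the first term in $L^2(\Omega\times\mathcal{O}\times(0,T))$ by $\Vert {\cal D}_\sigma(X_m) - {\cal D}_\sigma(X_\sigma)\Vert_{L^2(\Omega\times\mathcal{O}\times(0,T))}$. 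This tends to zero by Proposition \ref{CS} (ii).

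For the second term, we use \eqref{conv}. For a.e. $(\omega,t)$, the function $a={\cal D}_\sigma(X_\sigma)(\omega,t)\in L^2(\mathcal{O})$, so $\Vert (P_m-I)a\Vert_{L^2(\mathcal{O})}^2\to 0$. Moreover, this quantity is dominated by $4\Vert a\Vert^2_{L^2(\mathcal{O})}$, which is integrable over $\Omega\times(0,T)$. Integrability holds because ${\cal D}_\sigma(X_\sigma)\in L^2(\Omega\times\mathcal{O}\times(0,T))$ as the strong limit in Proposition \ref{CS}, or directly since $|{\cal D}_\sigma(r)|\le \max(d_1,d_2)|r|$ and $X_\sigma\in L^2$ by \eqref{convxm}. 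Lebesgue's dominated convergence theorem on $\Omega\times(0,T)$ then gives convergence to zero. An equivalent route avoiding dominated convergence is to expand in the eigenbasis: $\mathbb{E}\int_0^T\Vert (I-P_m)a\Vert^2dt=\sum_{k>m}\mathbb{E}\int_0^T\scal{a,e_k}_2^2dt$ is the tail of a convergent series.

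Item (ii) is identical, with ${\cal D}_\sigma$ replaced by $h_\sigma$ and Proposition \ref{CS} (ii) replaced by (iii). Square integrability of $h_\sigma(X_\sigma)$ follows from $|h_\sigma(r)|\le \sqrt{M\max(d_1,d_2)}\,|r|$, a consequence of \eqref{hsigma} and $h_\sigma(0)=0$.

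The only point requiring care is the passage from the pointwise-in-$(\omega,t)$ convergence in \eqref{conv} to convergence in the Bochner space. We expect the dominated convergence argument, or the series-tail argument, to handle this without difficulty. No further obstacle is anticipated, since the contraction property of $P_m$ makes the first term trivial once Proposition \ref{CS} is available.
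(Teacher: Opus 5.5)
Your proposal is correct and follows the paper's proof: the same triangle-inequality split, with Proposition~\ref{CS} together with the $L^2$-contractivity of $P_m$ handling the first term, and \eqref{conv} handling the second. Your dominated-convergence (or series-tail) argument for lifting \eqref{conv} to convergence in $L^2(\Omega\times\mathcal{O}\times(0,T))$ supplies a step the paper leaves implicit, and the $L^2$ bound on $h_\sigma(X_\sigma)$ you give is likewise a correct detail the paper does not state.
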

\begin{proof} We only present the proof of (i) since the proof of (ii) is similar. We have that
    \begin{align*} 
    &\Vert P_m  {\cal D}_{\sigma} (X_m) - {\cal D}_{\sigma} (X_\sigma) \Vert_{L^2(\Omega \times \mathcal{O} \times (0,T))} \\&
    \leq \Vert P_m  \left( {\cal D}_{\sigma} (X_m) - {\cal D}_{\sigma} (X_\sigma)\right)  \Vert_{L^2(\Omega \times \mathcal{O} \times (0,T))} + \Vert P_m  {\cal D}_{\sigma} (X_\sigma) - {\cal D}_{\sigma} (X_\sigma) \Vert_{L^2(\Omega \times \mathcal{O} \times (0,T))}\\&
    \leq c \Vert {\cal D}_{\sigma} (X_m)- {\cal D}_{\sigma} (X_{\sigma}) \Vert_{L^2(\Omega \times \mathcal{O} \times (0,T))} + \Vert P_m {\cal D}_{\sigma} (X_{\sigma}) - {\cal D}_{\sigma} (X_{\sigma})\Vert_{L^2(\Omega \times \mathcal{O} \times (0,T))}.
    \end{align*}

We deduce from Proposition
\ref{CS} that the first term of the right-hand-side converges to zero and from \eqref{conv} that the second term also converges to zero. 
\end{proof}

The final step is to pass to the limit as $m\to \infty$. To that purpose, we
multiply \eqref{pbinm} by the product $y \psi$, where $y(\omega)$ is any
a.s. bounded random variable and  $\psi(t)$ is a bounded function on $(0,T)$%
; we integrate from $0$ to $T$ and we take the expectation, which yields for
all $j = 1,...,m$

\begin{align}  \label{passlim}
\mathbb{E}\int_0^T y \psi &\int_ \mathcal{O} X_m(x,t) \Db{e}_j dx dt  \notag \\& = \mathbb{E}%
\int_0^T y \psi \int_ \mathcal{O}\displaystyle{\ b^0_{\sigma m}(x)} \Db{e}_j dx
dt + \mathbb{E}\int_0^T y \psi \int_0^t \int_\mathcal{O} P_m {D}_{\sigma}
( X_m) \Delta \Db{e}_j dx dt  \notag \\
& \nonumber + \mathbb{E}\int_0^T y \psi \int_0^t \int_D P_m h_\sigma(X_m) \Db{e}_j dx dt + 
\mathbb{E}\int_0^T y \psi \Db{\beta}_j(t
)\sqrt{\lambda_j} dx dt \\&= \mathbb{E}%
\int_0^T y \psi \int_ \mathcal{O}\displaystyle{\ b^0_{\sigma m}(x)} \Db{e}_j dx
dt + \mathbb{E}\int_0^T y \psi \int_0^t \int_\mathcal{O} P_m {D}_{\sigma}
( X_m) \Delta \Db{e}_j dx dt  \notag \\
&+ \mathbb{E}\int_0^T y \psi \int_0^t \int_D P_m h_\sigma(X_m) \Db{e}_j dx dt + 
\mathbb{E}\int_0^T y \psi \int_ \mathcal{O} P_m(\sqrt{Q} W(t))e_j  dx dt
\end{align}

\noindent Passing to the limit in \eqref{passlim} by applying Lebesgue dominated
convergence Theorem, using \eqref{pbinm0}, \eqref{convxm} and Lemma %
\ref{projconv}, we deduce that

\begin{align*}
\mathbb{E}\int_0^T \int_ \mathcal{O} y \psi X_\sigma \Db{e}_j dx dt = \mathbb{E}%
\int_0^T \int_ \mathcal{O}y \psi \displaystyle{\ b^0_{\sigma }(x)} \Db{e}_j dx dt
+ \mathbb{E}\int_0^T y \psi \int_0^t \int_\mathcal{O}  {D}_{\sigma} (
X_\sigma) \Delta  \Db{e}_j dx dt  \notag \\
+ \mathbb{E}\int_0^T y \psi \int_0^t \int_D h_\sigma(X_\sigma) \Db{e}_j dx dt + {\mathbb{E}\int_0^T y \psi \int_ \mathcal{O} \sqrt{Q} W(t)e_j  dx dt}
\end{align*}
for all $y \in L^\infty(\Omega)$ and $\psi \in L^{\infty}(0, T)$.\newline

We remark that the linear combinations of the ${e}_j$'s are dense in $H^1_0(%
\mathcal{O})$, so that  
\begin{align*}
\mathbb{E}\int_0^T \int_ \mathcal{O} y \psi X_\sigma w dx dt = \mathbb{E}%
\int_0^T \int_ \mathcal{O}y \psi \displaystyle{\ b^0_{\sigma }(x)} w dx dt + 
\mathbb{E}\int_0^T y \psi \int_0^t \dual {\Delta {D}_{\sigma} ( X_\sigma),
 w }  dt  \notag \\
+ \mathbb{E}\int_0^T y \psi \int_0^t \int_D h_\sigma(X_\sigma) w dx dt + \sum_{j=1}
^\infty \mathbb{E}%
\int_0^T y \psi \int_\mathcal{O} \sqrt{Q}W(t) w  dx dt
\end{align*}
for all $y \in L^\infty(\Omega)$, $\psi \in L^{\infty}(0, T)$ and $w \in H^1_0( \mathcal{O})$.\\
Thus we  conclude that almost surely and for a.e. $t \in (0,T)$  
\begin{equation*}
{X_{\sigma}}(t)  = b^0_{\sigma} + \int_0^t \{ \Delta {D}_{\sigma
}\left( X_{\sigma}(s)\right) + h_{\sigma }  \left( X_{\sigma}(s) \right) \}
ds + {\sqrt{Q}W(t),}\ \mathnormal{in }\  H^{-1}(\mathcal{O}).
\end{equation*}

\subsection{Proof of the uniqueness}

As for the existence proof, the main idea is to work in the ${H^{-1}}(%
\mathcal{O})$ topology. This will also be the key idea for obtaining the
error estimate between $X_\sigma$ and $X_{0}$. \newline

Next we prove the uniqueness of the solution of Problem $(P_\sigma)$.

\begin{lem}
\textit{The weak solution of Problem $(P_\sigma)$ is unique.}
\end{lem}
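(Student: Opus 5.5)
Let $X_\sigma^1$ and $X_\sigma^2$ be two solutions of Problem $(P_\sigma)$ in the sense of Definition \ref{weak def}, with the same initial datum $b_\sigma^0$ and driven by the same Wiener process. The plan is to work in the $H^{-1}(\mathcal{O})$ topology, as announced above. Subtracting the two identities \eqref{def-form} makes the noise $\sqrt{Q}W(t)$ and the initial datum cancel, so the difference $Z:=X_\sigma^1-X_\sigma^2$ satisfies, almost surely and in $H^{-1}(\mathcal{O})$,
\begin{equation*}
Z(t)=\int_0^t \pr{\Delta\pr{{\cal D}_\sigma(X^1_\sigma)-{\cal D}_\sigma(X^2_\sigma)}+h_\sigma(X^1_\sigma)-h_\sigma(X^2_\sigma)}ds .
\end{equation*}
Since there is no stochastic integral, this is a pathwise deterministic evolution equation. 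The integrand lies in $L^2(0,T;H^{-1}(\mathcal{O}))$ almost surely, because ${\cal D}_\sigma(X^i_\sigma)\in L^2(0,T;H^1_0)$ and $h_\sigma(X^i_\sigma)\in L^2(0,T;L^2)$, the latter by the Lipschitz character of $h_\sigma$. Hence $Z$ is absolutely continuous with values in $H^{-1}(\mathcal{O})$.

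We apply Lemma \ref{lem:Ito_formula} with $\Psi=0$ and $F(U)=\Vert U\Vert_{-1}^2$; this is simply the chain rule for $\Vert Z(t)\Vert_{-1}^2$. Using $<-\Delta f,g>_{-1}=\int_{\mathcal O} fg$ for $f\in H_0^1(\mathcal{O})$, $g\in L^2(\mathcal{O})$, we obtain
\begin{equation*}
\Vert Z(t)\Vert_{-1}^2+2\int_0^t\int_{\mathcal O}\pr{{\cal D}_\sigma(X^1_\sigma)-{\cal D}_\sigma(X^2_\sigma)}Z\,dx\,ds
=2\int_0^t <h_\sigma(X^1_\sigma)-h_\sigma(X^2_\sigma),Z>_{-1}ds .
\end{equation*}
For the right-hand side we mimic the treatment of $T_2$ in Proposition \ref{P2}. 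Writing $V=(-\Delta)^{-1}Z$, we bound
\begin{equation*}
2\int_{\mathcal O}(h_\sigma(X^1_\sigma)-h_\sigma(X^2_\sigma))V\,dx\le \frac1\alpha\int_{\mathcal O}\vert h_\sigma(X^1_\sigma)-h_\sigma(X^2_\sigma)\vert^2dx+\alpha\Vert V\Vert_{L^2}^2 .
\end{equation*}
By \eqref{hsigma} and the monotonicity of ${\cal D}_\sigma$ and $b_\sigma^{-1}$, the first term is at most $\frac M\alpha\int_{\mathcal O}({\cal D}_\sigma(X^1_\sigma)-{\cal D}_\sigma(X^2_\sigma))Z\,dx$. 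This uses that $\vert b^{-1}_\sigma(u)-b^{-1}_\sigma(v)\vert\le\vert u-v\vert$ with the same sign, so the product of absolute values equals the nonnegative product $({\cal D}_\sigma(u)-{\cal D}_\sigma(v))(u-v)$ up to that inequality. Choosing $\alpha\ge M/2$, this term is absorbed into the left-hand side. The second term is bounded by Poincaré's inequality: $\Vert V\Vert_{L^2}^2\le C\Vert\nabla V\Vert_{L^2}^2=C\Vert Z\Vert_{-1}^2$.

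We then arrive at $\Vert Z(t)\Vert_{-1}^2\le \alpha C\int_0^t\Vert Z(s)\Vert_{-1}^2ds$ almost surely. Gronwall's lemma gives $Z\equiv0$ in $H^{-1}(\mathcal{O})$ for all $t$, hence $X^1_\sigma=X^2_\sigma$ a.e.; we can argue pathwise or take expectations first.

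The main obstacle is justifying the chain rule in $H^{-1}(\mathcal{O})$ for $Z$, since Definition \ref{weak def} only gives the equation in integrated form and $X_\sigma^i\in L^\infty(0,T;L^2)$. The natural route is the classical Lions–Magenes type lemma: $Z\in L^2(0,T;L^2)$ with $Z'\in L^2(0,T;H^{-1})$, and the duality pairing of ${\cal D}_\sigma(X^1_\sigma)-{\cal D}_\sigma(X^2_\sigma)\in H^1_0$ against $Z$ is taken via $\dual{\cdot,\cdot}$. Failing that, we would regularize $Z$ in time (Steklov averages), apply the identity to the regularized function, and pass to the limit.
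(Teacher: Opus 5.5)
Your proof is correct and follows the paper's argument: the $H^{-1}$ energy identity for $Z=X^1_\sigma-X^2_\sigma$ (the noise cancels), Young's inequality on the $h_\sigma$-term combined with \eqref{hsigma} and $|b_\sigma^{-1}(u)-b_\sigma^{-1}(v)|\le|u-v|$ to absorb it into the monotone ${\cal D}_\sigma$-term, then Poincar\'e and Gronwall. The chain-rule step you flag as the main obstacle is in fact elementary: almost surely $Z\in W^{1,2}(0,T;H^{-1}(\mathcal{O}))$, so $t\mapsto\Vert Z(t)\Vert_{-1}^2$ is absolutely continuous with derivative $2<Z',Z>_{-1}$ in the Hilbert space $H^{-1}(\mathcal{O})$, and no Lions--Magenes or Steklov argument is needed (the paper simply invokes its It\^o formula with zero noise).
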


\noindent \textbf{Proof } Let $X^1_{\sigma}$ and $X^2_{\sigma}$ be two
solutions of Problem $(P_\sigma)$. Then, almost surely, and for a.e. $t \in
(0,T),$

\begin{equation*}
(X^1_{\sigma} - X^2_{\sigma})(t) = \int_0^t \{ \Delta D_{\sigma }\left(
X^1_{\sigma}\right) - \Delta D_{\sigma }\left( X^2_{\sigma}\right) +
h_{\sigma} \left( X^1_{\sigma}\right) - h_{\sigma} \left(
X^2_{\sigma}\right)\} ds \mbox{   in   } {H^{-1}} (\mathcal{D}). 
\end{equation*}
Setting $F(X) = {\lVert X \rVert }^2_{{H^{-1}}(\mathcal{O})}$, and noting
that $\displaystyle{\frac{\partial F}{\partial X}}(X) = 2 X$, we deduce that 
\begin{equation*}
\lVert {\ (X^1_{\sigma} - X^2_{\sigma})(t) \rVert }^2_{{H^{-1}}(\mathcal{O}%
)} = \int_0^t < \{ \Delta D_{\sigma }\left( X^1_{\sigma}\right)- \Delta
D_{\sigma }\left( X^2_{\sigma}\right) + h_{\sigma} \left(
X^1_{\sigma}\right) - h_{\sigma} \left( X^2_{\sigma}\right)\}, 2
(X^1_{\sigma} - X^2_{\sigma})> ds 
\end{equation*}
Next we set $v_1 = - \Delta ^{-1} X^1_{\sigma }$, $v_2 = - \Delta ^{-1}
X^2_{\sigma }$ and $v=v_1-v_2$. 

This yields 
\begin{equation*}
\begin{array}{cc}
{\frac{1}{2}} \lVert {\nabla (v_1 - v_2)} \rVert^2(t)& + \int_0^t \int_%
\mathcal{O} ({\cal D}_{\sigma}( X^1_{\sigma}) - {\cal D}_{\sigma} ( X^2_{\sigma }))
(X^1_{\sigma} - X^2_{\sigma}) dxds  \\[10pt]&
= - \int_0^t \int_\mathcal{O}
(h_{\sigma}\left( X^1_{\sigma } \right)- h_{\sigma }( X^2_{\sigma } ) )(v_1
- v_2) dxds,
\end{array}
\end{equation*}
for a.e. $t\in (0,T).$ In view of \eqref{hsigma} and the fact that $\vert
(b^{-1}_\sigma(X^1_{\sigma }) - b^{-1}_\sigma(X^2_{\sigma}) \vert \leq \vert
X^1_\sigma - X^2_\sigma \vert $, we deduce that 
\begin{align}  \label{hyphu}
& & \vert \int_\mathcal{O} (h_{\sigma }\left( X^1_{\sigma } \right)-
h_{\sigma }( X^2_{\sigma } ) )( v_1-v_2) dx \vert \leq \frac{\varepsilon }{2}%
\int_{\mathcal{O}} \big(h_{\sigma }\left( X^1_{\sigma } \right)- h_{\sigma
}( X^2_{\sigma } ) \big)^2dx + \frac{1}{2 \varepsilon}\int_{\mathcal{O}} (
v_1-v_2)^2 dx  \notag \\
& & \leq \frac{\varepsilon M }{2}\int_{\mathcal{O}} \vert {\cal D}_{\sigma}(
X^1_{\sigma}) - D_{\sigma }\left( X^2_{\sigma } \right) \vert \vert
b^{-1}_\sigma(X^1_{\sigma }) - b^{-1}_\sigma(X_\sigma^2) \vert dx + \frac{1%
} { 2 \varepsilon} \Vert v_1 -v_2 \Vert^2_{H^1_0}  \notag \\
& & \leq \frac{\varepsilon M }{2}\int_{\mathbb{D}} \vert {\cal D}_{\sigma} \left(
X^1_{\sigma} \right) - D_{\sigma } \left( X^2_{\sigma } \right) \vert \vert
X^1_{\sigma } - X_\sigma^2 \vert dx + \frac{1} { 2 \varepsilon} \Vert v_1
-v_2 \Vert^2_{H^1_0}
\end{align}
Then, by choosing $\varepsilon = \displaystyle{\frac{1}{M}}$ in \eqref{hyphu}
we deduce that 
\begin{equation*}
\Vert (v_1 -v_2) (t) \Vert_{H^1_0} ^2 + \int_0^t \int_{\mathcal{O}} (
{\cal D}_{\sigma}( X^1_{\sigma}) - D_{\sigma }\left( X^2_{\sigma } \right)
(X^1_{\sigma } - X_\sigma^2) dx \leq M \int_0^t \Vert v_1- v_2
\Vert^2_{H^1_0},
\end{equation*}
which in view of the monotonicity of ${\cal D}_{\sigma}$ implies that 
\begin{equation*}
\Vert (v_1 -v_2) (t) \Vert_{H^1_0} ^2 \leq M \int_0^t \Vert v_1- v_2
\Vert^2_{H^1_0},
\end{equation*}
Thus $v_1=v_2$, so that also $X_\sigma^1=X^2_\sigma$.

\section{Singular limit as $\protect\sigma $ tends to zero.}

In this section, we prove an error estimate between $X_\sigma$ and $X_0$.

\begin{theorem}
The following error estimate holds  
\begin{equation*}
\Vert X_\sigma - X_0 \Vert_{L^2( \Omega \times (0,T) \times \mathcal{O})}
\leq C \sqrt{\sigma},
\end{equation*}
where $C$ is a positive constant independent of $\sigma >0$.
\end{theorem}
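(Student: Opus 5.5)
We work pathwise in the $H^{-1}(\mathcal{O})$ topology, as in the uniqueness proof. Here $X_0$ is the solution of Problem $(P_\sigma)$ with $\sigma=0$, so $b_0^{-1}=\mathrm{id}$, ${\cal D}_0=D$ and $h_0=h$. Its existence and uniqueness follow from the theorem of Section 3, whose proof goes through verbatim for $\sigma=0$. Subtracting the integral identities \eqref{def-form} for $X_\sigma$ and $X_0$, the additive noise $\sqrt{Q}W(t)$ cancels. Hence $Z:=X_\sigma-X_0$ satisfies, almost surely,
\begin{equation*}
Z(t)=\big(b^0_\sigma-u_0\big)+\int_0^t\Big(\Delta\big({\cal D}_\sigma(X_\sigma)-D(X_0)\big)+h_\sigma(X_\sigma)-h(X_0)\Big)\,ds \quad\text{in } H^{-1}(\mathcal{O}).
\end{equation*}
We apply Lemma \ref{lem:Ito_formula} with $\Psi=0$, so that no trace term appears, and $F(X)=\Vert X\Vert^2_{H^{-1}}$. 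With $v=(-\Delta)^{-1}Z$ this gives
\begin{equation*}
\Vert\nabla v(t)\Vert^2_{L^2}+2\int_0^t\!\!\int_{\mathcal{O}}\big({\cal D}_\sigma(X_\sigma)-D(X_0)\big)Z\,dx\,ds=\Vert\nabla v(0)\Vert^2_{L^2}+2\int_0^t\!\!\int_{\mathcal{O}}\big(h_\sigma(X_\sigma)-h(X_0)\big)v\,dx\,ds .
\end{equation*}

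The key elementary fact is that $|b_\sigma^{-1}(r)-r|\le\sigma$ for all real $r$. Since ${\cal D}_\sigma=D\circ b_\sigma^{-1}$ and $h_\sigma=h\circ b_\sigma^{-1}$, we split
\begin{equation*}
{\cal D}_\sigma(X_\sigma)-D(X_0)=\big[D(b_\sigma^{-1}(X_\sigma))-D(X_\sigma)\big]+\big[D(X_\sigma)-D(X_0)\big].
\end{equation*}
The first bracket is bounded pointwise by $\max(d_1,d_2)\,\sigma$. For the second, monotonicity together with $D'\ge\min(d_1,d_2)$ gives $(D(X_\sigma)-D(X_0))Z\ge\min(d_1,d_2)|Z|^2$. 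Young's inequality then yields
\begin{equation*}
\int_{\mathcal{O}}\big({\cal D}_\sigma(X_\sigma)-D(X_0)\big)Z\,dx\ \ge\ \tfrac12\min(d_1,d_2)\Vert Z\Vert^2_{L^2}-C\sigma^2|\mathcal{O}|.
\end{equation*}
Similarly, with $L$ the Lipschitz constant of $h$, we have $|h_\sigma(X_\sigma)-h(X_0)|\le L\sigma+L|Z|$. Combining this with the Poincar\'e inequality $\Vert v\Vert_{L^2}\le C\Vert\nabla v\Vert_{L^2}$ and Young's inequality, the $h$-term is bounded by $\tfrac14\min(d_1,d_2)\Vert Z\Vert^2_{L^2}+C\Vert\nabla v\Vert^2_{L^2}+C\sigma^2$.

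For the initial datum, $b^0_\sigma-u_0=\sigma\mathbf{1}_{\{u_0>0\}}$. Hence $\Vert\nabla v(0)\Vert_{L^2}^2=\Vert b^0_\sigma-u_0\Vert^2_{H^{-1}}\le C\Vert b^0_\sigma-u_0\Vert^2_{L^2}\le C\sigma^2$. Collecting terms gives, almost surely,
\begin{equation*}
\Vert\nabla v(t)\Vert^2_{L^2}+\tfrac12\min(d_1,d_2)\int_0^t\Vert Z\Vert^2_{L^2}\,ds\le C\sigma^2(1+T)+C\int_0^t\Vert\nabla v\Vert^2_{L^2}\,ds .
\end{equation*}
Gronwall's lemma then gives $\int_0^T\Vert Z\Vert_{L^2}^2\,ds\le C(T)\sigma^2$ almost surely, with $C(T)$ depending only on $d_1,d_2,M,L,|\mathcal{O}|,T$ and not on $\sigma$. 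Taking expectations yields $\Vert X_\sigma-X_0\Vert_{L^2(\Omega\times(0,T)\times\mathcal{O})}\le C\sigma\le C\sqrt{\sigma}$ for $\sigma\in(0,1)$. This is in fact a stronger rate than the one stated. If one prefers to keep the coupling with \eqref{hsigma} as in the uniqueness proof, the cross term can instead be estimated by $C\sigma\Vert Z\Vert_{L^1}$ without Young's inequality, using the a priori bound of Proposition \ref{estXm}. That route naturally produces the $\sqrt\sigma$ rate.

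The main obstacle is justifying the $H^{-1}$ chain rule for $Z$. We only know ${\cal D}_\sigma(X_\sigma),D(X_0)\in L^2(\Omega\times(0,T);H^1_0)$ and $X_\sigma,X_0\in L^\infty(0,T;L^2(\Omega\times\mathcal{O}))$. The drift $\Delta({\cal D}_\sigma(X_\sigma)-D(X_0))$ therefore lies in $L^2(0,T;H^{-1})$, and its pairing with $Z$ must be interpreted as $\int_{\mathcal{O}}({\cal D}_\sigma(X_\sigma)-D(X_0))Z\,dx$. This is exactly the manipulation already used in the uniqueness proof, so we invoke it in the same way: Lemma \ref{lem:Ito_formula} with zero noise, which reduces to the deterministic Lions--Magenes energy identity for the Gelfand triple $L^2\subset H^{-1}$ duality.
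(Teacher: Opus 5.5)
Your proof is correct and uses the same $H^{-1}$ energy framework as the paper. You subtract the two equations (the additive noise cancels) and test with $v=(-\Delta)^{-1}$ of the difference. You split ${\cal D}_\sigma(X_\sigma)-D(X_0)$ into the coercive part $D(X_\sigma)-D(X_0)$ (using $D'\ge\min(d_1,d_2)$) plus a perturbation bounded by $d_1\sigma$. The $h$-terms are handled by Lipschitz bounds, the initial datum by $|b^0_\sigma-u_0|\le\sigma$, and Gronwall closes the argument.

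The genuine difference is the cross term $\int|D(X_\sigma)-{\cal D}_\sigma(X_\sigma)|\,|X_0-X_\sigma|$. The paper bounds it linearly by $d_1\sigma|\mathcal O|^{1/2}\sup_\tau\Vert X_0(\tau)-X_\sigma(\tau)\Vert_{L^2(\Omega\times\mathcal O)}$, using the $\sigma$-uniform a priori bound of Proposition \ref{estXm}. That leaves an $O(\sigma)$ term on the right of \eqref{gronwall}, hence only the rate $\sqrt\sigma$. You instead absorb the cross term by Young's inequality into the coercive term. This leaves $O(\sigma^2)$ and gives the sharper rate $C\sigma$.

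As a by-product you need neither Proposition \ref{estXm} nor expectations. The paper takes expectations in order to use that $L^2(\Omega)$ bound. Your estimate holds for a.e.\ $\omega$ with a deterministic constant depending only on $d_1,d_2$, the Lipschitz constant of $h$, the Poincar\'e constant, $|\mathcal O|$ and $T$.

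One small correction: on $\{u_0=0\}$, $b^0_\sigma$ is only some selection in $[0,\sigma]$. So write $0\le b^0_\sigma-u_0\le\sigma$ rather than $b^0_\sigma-u_0=\sigma\mathbf{1}_{\{u_0>0\}}$; this bound is all you use.
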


\noindent \textbf{Proof } We take the difference of the equations for $%
X_\sigma$ and $X_{0}$ to deduce that almost surely and for a.e. $t \in (0,T)$
\begin{equation*}
(X_{0} - X_\sigma)(t) = \int_0^t \{ \Delta D \left( X_{0}\right) - \Delta
D_{\sigma }\left( X_\sigma\right) + h \left( X_{0}\right) - h_{\sigma}
\left( X_\sigma\right) \} ds \mbox{   in   } {H^{-1}} (\mathcal{O}). 
\end{equation*}
%We choose the test function $ \varphi = v $ such that $v= - \Delta^{-1}w$ in $H^1_0(\mathcal{O})$ where 
%$w=   X_{0} - X_\sigma$.

Since $F(X) = {\lVert X \rVert }^2_{{H^{-1}}(\mathcal{O})}$, and noting that 
$\frac{\partial F}{\partial X}(X) = 2 X$, we deduce that 
\begin{align*}
&\lVert {\ (X_{0} - X_\sigma)(t) \rVert }^2_{{H^{-1}}(\mathcal{O})} - {\
\lVert (X_{0} - X_\sigma)(0) \rVert }^2_{{H^{-1}}(\mathcal{O})} \\
=& \int_0^t < \{ \Delta D\left( X_{0}\right) - \Delta D_{\sigma }\left(
X_\sigma\right) + h\left( X_{0}\right) - h_{\sigma} \left(
X_\sigma\right)\}, 2 (X_{0} - X_\sigma)> ds.
\end{align*}
Next we set $v_0 = - \Delta ^{-1} X_{0}$, $v_{\sigma } = - \Delta ^{-1}
X_\sigma$ and $v=v_0-v_{\sigma}$. This yields 
\begin{equation*} 
\begin{split}
&{\frac{1}{2}} \lVert {\nabla (v)} \rVert^2(t) - {\frac{1}{2}} \lVert {%
\nabla (v)} \rVert^2(0) + \int_0^t \int_\mathcal{O} (D_{0}( X_{0}) - D_{0} (
X_{\sigma })) (X_{0} - X_\sigma)dxds = \\
&-\int_0^t \int_\mathcal{O} \{ ( D ( X_{\sigma }) - {\cal D}_{\sigma}( X_\sigma)
(X_{0} - X_\sigma) + (h \left( X_{0}\right) - h\left( X_\sigma\right) + h(
X_\sigma) - h_{\sigma }( X_{\sigma }) )v\}dxds,
\end{split}%
\end{equation*}
for a.e. $t\in (0,T)$. Taking the expectation yields 
\begin{equation}
\begin{split}  \label{inter}
\frac 12 \mathbb{E} \Vert v (t)\Vert^2_{H^1_0(\mathcal{O})}& + \mathbb{E}%
\int_0^t \int_{\mathcal{O}} [ D(X_{0} ) - D (X_\sigma) ] ( X_{0} - X_\sigma)
dxds \\
& \leq \frac12 \mathbb{E} \Vert v(0)\Vert^2 _{H^1_0(\mathcal{O})} + \mathbb{E%
}\int_0^t \int_{\mathcal{O}} \vert D(X_\sigma ) - {\cal D}_{\sigma} (X_\sigma)
\vert \vert X_{0} -X_\sigma \vert dxds \\
& + \mathbb{E}\int_0^t \int_{\mathcal{O}} (h(X_{0}) - h( X_\sigma) ) v dxds
+ \mathbb{E}\int_0^t \int_{\mathcal{O}} (h(X_{\sigma }) - h_{\sigma}(
X_\sigma) ) v dxds.
\end{split}%
\end{equation}
First we consider the second term on the right-hand-side of \eqref{inter}.
Since 
\begin{equation*}
\vert D(s) - {\cal D}_{\sigma}(s)\vert =\left\{ 
\begin{array}{ll}
0, & s\leq 0, \\ 
d_1 s, & 0 < s < \sigma, \\ 
d_{1}\sigma , & s \geq \sigma ,%
\end{array}
\right.
\end{equation*}
we have that in view of Proposition \ref{estXm}

\begin{equation}
\begin{split}
\mathbb{E}\int_0^t \int_{\mathcal{O}} \vert D(X_\sigma ) - {\cal D}_{\sigma}
(X_\sigma) \vert \vert X_{0} -X_\sigma \vert dxds & \leq d_1 \vert \sigma
\vert \mathbb{E} \int_0^t \int_\mathcal{O} \vert X_{0} - X_\sigma \vert dxds
\notag \\
& = d_1 \vert \sigma \vert\int_0^t (\int_\Omega \int_\mathcal{O} \vert X_{0}
- X_\sigma \vert^2)^{\frac12} ds \vert \mathcal{O}\vert^{\frac12}  \notag \\
& \leq d_1 \vert \sigma\vert \vert \mathcal{O}\vert^{\frac12} \sup_{\tau \in
(0,T)} \Vert X_0 (\tau) - X_\sigma(\tau) \Vert_{L^2(\Omega \times \mathcal{O}%
)}  \notag \\
& \leq 2 C d_1 \vert \sigma\vert \vert \mathcal{O}\vert^{\frac12}  \notag
\end{split}%
\end{equation}
where $\vert\mathcal{O}\vert$ stands for the Lebesgue measure of $\mathcal{O}
$. \newline
As for the third term on the right-hand-side of \eqref{inter}, there holds 
\begin{equation}
\begin{split}
\mathbb{E} \int_0^t \int_\mathcal{O} ( h(X^{0 }) - h( X_{\sigma }) )v dsdx &
\leq \frac{\varepsilon}{2} \mathbb{E} \int_0^t \int_\mathcal{O} ( (h(X_{0})
- h( X_{\sigma }) )^2dsdx + \frac{1}{2\varepsilon}\mathbb{E} \int_0^t\Vert
v\Vert_{H^1_0(\mathcal{O})}^2ds  \notag \\
 \leq \frac{\varepsilon M}{2}\mathbb{E}\int_0^t \int_{\mathcal{O}}
&\max(d_1,d_2) \vert X_{0} - X_\sigma \vert^2dsdx + \frac{1}{2\varepsilon}%
\mathbb{E} \int_0^t\Vert v\Vert_{H^1_0(\mathcal{O})}^2 ds,  \notag
\end{split}%
\end{equation}
where we have used \eqref{hyph1}. As for the fourth term, we have 
\begin{equation*}
\vert h(r) - h_\sigma(r)\vert =\left\{ 
\begin{array}{ll}
0, & r<0, \\ 
h(r) - h(0), & 0 < r < \sigma, \\ 
h(r)-h(r-\sigma) , & r>\sigma ,%
\end{array}
\right.
\end{equation*}
so that, in view of \eqref{hyph1}, 
\begin{equation*}
\vert h(r)- h_\sigma(r) \vert \leq \sqrt{M max(d_1,d_2)} \sigma,
\end{equation*}
which in turn implies that 
\begin{align}
\mathbb{E}\int_0^t \int_{\mathcal{O}} (h(X_{\sigma }) - h_{\sigma}(
X_\sigma) ) v dxds \leq \frac{\varepsilon M }{2} \max(d_1,d_2) \sigma^2 T
\vert \mathcal{O}\vert + \frac{1}{2 \varepsilon} \mathbb{E} \int_0^t \Vert v
\Vert^2_{H^1_0(\mathcal{O})}ds.  \notag
\end{align}
As for the initial function, we have that 
\begin{equation*}
v(0) = - \Delta^{-1}( b(u_0) - b_\sigma(u_0)) = - \Delta^{-1} \chi _0,
\end{equation*}
where $\chi _0 := b(u_0) - b_\sigma(u_0)$. Remark that $0 \leq \chi_0 <
\sigma $\, which implies that $\Vert v(0) \Vert_{H^1_0(\mathcal{O})} \leq C
\sigma. $ Collecting all the terms and noting that ${D}^{\prime }_0 \geq
\min (d_1,d_2)$ yields 
\begin{equation}
\begin{split}
\frac 12 \mathbb{E} \Vert v(t) \Vert^2_{H^1_0(\mathcal{O})} + & \min
(d_1,d_2) \mathbb{E}\int_0^t \int_{\mathcal{O}} ( X_{0} - X_\sigma)^2 dxds \\
\leq & \;\; \frac12 C \sigma^2 +2 C d_1 \vert \sigma\vert \vert \mathcal{O}%
\vert^{\frac12} +\frac{\varepsilon M}{2}\mathbb{E} \int_0^t \int_{\mathcal{O}%
} \max(d_1,d_2) (X_{0} - X_\sigma )^2dsdx \\&+ \frac{1}{2\varepsilon}\mathbb{E}
\int_0^t\Vert v\Vert_{H^1_0(\mathcal{O})}^2ds  \notag 
 + \frac{\varepsilon M }{2} \max(d_1,d_2) \sigma^2 T \vert \mathcal{O}\vert
+ \frac{1}{2 \varepsilon} \mathbb{E} \int_0^t \Vert v \Vert^2_{H^1_0(%
\mathcal{O})}ds.
\end{split}%
\end{equation}
Thus, 
\begin{equation}  \label{gronwall}
\begin{split}
\frac 12 \mathbb{E} \Vert v(t) \Vert^2_{H^1_0(\mathcal{O})} &+ \left(\min
(d_1,d_2) - \frac{\varepsilon M}{2}\max(d_1,d_2) \right)\mathbb{E}\int_0^t
\int_{\mathcal{O}} ( X_{0} - X_\sigma)^2 dxds \\
& \leq \frac12 C \sigma^2 + 2 C d_1 \vert \sigma\vert \vert \mathcal{O}%
\vert^{\frac12} + \frac{\varepsilon M }{2} \max(d_1,d_2) \sigma^2 T \vert 
\mathcal{O}\vert + \frac{1}{ \varepsilon} \mathbb{E} \int_0^t \Vert v
\Vert^2_{H^1_0(\mathcal{O})}ds.
\end{split}%
\end{equation}
We choose $\varepsilon$ small enough. In order to apply Gronwall Lemma, we
rewrite the equation in the form 
\begin{align}
\mathbb{E} \Vert v(t) \Vert^2_{H^1_0(\mathcal{O})} \leq c \sigma + \tilde{C}
\int_0^t \mathbb{E} \Vert v(t) \Vert^2_{H^1_0(\mathcal{O})} ds,  \notag
\end{align}
so that, $\mathbb{E} \Vert v(t) \Vert^2_{H^1_0(\mathcal{O})} \leq c \sigma
e^{\tilde{C}t}.$ In particular, as $\sigma\rightarrow 0$, $v \to 0 $ in $%
L^\infty (0,T; L^2(\Omega;H^1_0(\mathcal{O}))) $.

Finally we deduce from \eqref{gronwall} that $\Vert X_\sigma - X_{0}
\Vert_{L^2( \Omega \times (0,T) \times \mathcal{O})} \leq C \sqrt{\sigma}.$ 

\section{The deterministic case}

Let us now consider the deterministic case, where the function $h$ coincides with that from the 
biological framework proposed by Mimura (cf. Hilhorst, Salin, Schneider, Gao \cite{HSSG}). The 
function $h$ is a quadratic function defined as follows. We set 
$$
f(r) = \lambda r(1-r), \,\,\, g(r) = \mu r(1-r) 
$$ 
for $r \in [-1,1]$ and 
\begin{equation*}
	h\left( r\right) =\left\{ 
	\begin{array}{ll}
		g(-r) & r \in [-1,0], \\ 
		f(r) , & r \in [0,1].%
	\end{array}
	\right.
\end{equation*}
Then ${B_\sigma} = 0$ so that Problem $(P_\sigma)$ reads as 
\begin{equation*}
	(P_\sigma) \quad \quad \left\{ 
	\begin{array}{ll}
		\displaystyle{ \frac{{\partial}X_{\sigma}}{{\partial}t}}= \Delta {D}_{\sigma }\left( X_{\sigma }\right) +h_{\sigma
		}\left( X_{\sigma }\right), & ~\mathcal{O}\times \left(
		0,T\right) , \\ 
		X_{\sigma }=0, & ~\partial \mathcal{O}\times \left( 0,T\right) , \\ 
		X_{\sigma }\left( 0\right) =b_{\sigma }^{0}, & ~\mathcal{O}\times \left\{
		0\right\} , %
	\end{array}
	\right. 
\end{equation*}
where ${\cal D}_\sigma$, $h_\sigma$ and $b^0_\sigma$ are defined as before. We proved that its unique weak solution $X_\sigma$ is such that (cf. \cite{HMS}, \cite{HSSG})
$$
-1 \leq {X_\sigma} \leq 1+ \sigma.
$$
so that the functtion ${b^{-1}_\sigma}(X_\sigma)$ satisties
$$
-1 \leq {b^{-1}_\sigma}(X_\sigma) \leq 1.
$$
The function $h^{'}$ is bounded on the interval [-1,1] so that $h$ is Lipschitz continuous on that interval. 
We can then define a Lipschitz continuous extension of  the function $h$ on the whole real line so that
it satisfies the hypotheses which we set earlier. Therefore the following error estimate holds 
\begin{equation*}
	\Vert X_\sigma - X_{0} \Vert_{L^2(\mathcal{O} \times (0,T))} \leq C \sqrt{\sigma}. \\
\end{equation*}

\medskip

\section{Appendix}

\begin{proof}[Proof of Lemma \protect\ref{Pmdelta}]

First we prove (i).

\begin{align*}
    \int_{\mathcal{O}} (P_mA) (x)X_m(x) dx & =   \int_{\mathcal{O}} \left(\sum_{j=1}^m  \dual{ A, \Db{e}_j} \Db{e}_j(x)\right) X_m(x) dx \\
    &= \int_{\mathcal{O}} \left(\sum_{j=1}^m  \dual{ A, \Db{e}_j} \Db{e}_j(x)\right) \left( \sum_{k=1}^m X_k e_k(x) \right) dx \\
    &=  \sum_{j=1}^m \sum_{k=1}^m \int_{\mathcal{O}} \left(  \dual{ A, \Db{e}_j} \Db{e}_j(x)  X_k e_k(x) \right) dx \\
    &=  \sum_{j=1}^m \sum_{k=1}^m \dual{ A, \Db{e}_j}   X_k \int_{\mathcal{O}}  e_j(x)  e_k(x)  dx 
    \\&=  \sum_{j=1}^m\dual{ A, e_j}   X_j 
\end{align*}
On the other hand, $\dual{A, X_m} = \dual{A,\sum_{j=1}^m  e_j X_j } = \sum_{j=1}^m \dual{A, e_j} X_j.   $\\
We  prove (ii). We have that 
\begin{equation*}
\begin{split}
P_m \Delta A = \sum_{j=1}^m  (\int_\mathcal{O} \Delta A(x) \Db{e}_j(x) dx) \Db{e}_j = \sum_{j=1}^m  (\int_\mathcal{O} A(x) \Delta \Db{e}_j(x) dx) \Db{e}_j\\
= - \sum_{j=1}^m  \lambda_j(\int_\mathcal{O} A(x) \Db{e}_j(x) dx) \Db{e}_j
\end{split}
\end{equation*}
and that
\begin{equation*}
\begin{split}
\Delta (P_m A) = \Delta \sum_{j=1}^m (\int_\mathcal{O}  A(x) \Db{e}_j(x) dx) \Db{e}_j = \sum_{j=1}^m  (\int_\mathcal{O} A(x) \Db{e}_j(x) dx)\Delta \Db{e}_j\\
= - \sum_{j=1}^m  \lambda_j(\int_\mathcal{O} A(x) \Db{e}_j(x) dx) \Db{e}_j.
\end{split}
\end{equation*}

\noindent Next we prove (iii). We have that 

$$ P_m A = \sum_{j=1}^m \left( \int_{\mathcal{O}} A(x)\Db{e}_j(x) dx \right) \Db{e}_j  $$
$$ \Delta^{-1} (P_m A) = \sum_{j=1}^m  \left( \int_{\mathcal{O}} A(x)\Db{e}_j(x) dx \right) \Delta^{-1} \Db{e}_j = - \frac{1}{\lambda_j} \sum_{j=1}^m \left( \int_{\mathcal{O}} A(x)\Db{e}_j(x) dx \right) \Db{e}_j $$
\begin{equation*}
\begin{split}
P_m  \{ \Delta^{-1} A \} &= \sum_{j=1}^m  \left(\int_\mathcal{O} {\Delta^{-1}} A(x) \Db{e}_j(x) dx \right) \Db{e}_j\\
& = \sum_{j=1}^m  \left( \int_\mathcal{O}  A(x)  {\Delta^{-1}} \Db{e}_j(x) dx\right) \Db{e}_j\\
& = - \frac{1}{\lambda_j} \sum_{j=1}^m \left( \int_{\mathcal{O}} A(x) \Db{e}_j(x) dx \right) \Db{e}_j.
\end{split}
\end{equation*}
\noindent Finally for (iv) we can easily see that 
$$ \int_{\mathcal{O}} (P_m A(x))B(x)dx = \int_{\mathcal{O}} A(x) (P_m B(x))dx$$
since $P_m$ is self-adjoint.
\end{proof}

\noindent \textbf{Acknowledgements} The authors would like to thank
Professor Guy Vallet and Professor Kunwoo Kim for helpful discussions.
D.G. aknowledges financial support from National Sciences and Engineering Research Council (NSERC), Canada, 
Grant/Award Number: RGPIN-2025-03963.
I.C. was partially supported by the Agence Nationale de la Recherche (ANR), project ANR-22-
CE40-0010 COSS and by Region Normandie and ERDF fund via the Idemo Scale Op project under convention
00152289 (BPI France and France 2030 AAP).


\begin{thebibliography}{9}

\bibitem{BarbuNonlin} Viorel Barbu, \textit{Nonlinear Differential Equations of Monotone Types in Banach Spaces}, Spronger, 2010.

\bibitem{BII}Viorel Barbu, Ioana Ciotir, Ionut Danaila, \textit{Existence and uniqueness of solution to the two-phase Stefan problem with convection},
Applied Mathematics and Optimization, 84(2) (2021).

\bibitem{BDPR}  {Viorel Barbu, Giuseppe Da Prato, Michael R\"ockner, \textit{Stochastic porous media equations},  Lecture Notes in Math. 2163, Springer,
2016.}

\bibitem{BDP}
Viorel Barbu and Giuseppe Da~Prato, \emph{The two phase stochastic {S}tefan
  problem}, Probab. Theory Related Fields, 124 (2002), no.~4, 544-560.

\bibitem{Rd} Viorel Barbu, Michael R\"{o}ckner, Francesco Russo, \textit{Stochastic porous media equations in $\mathbb{R}^{d}$}. J. Math. Pures Appl., 2015, 103(4): 1024-1052.

\bibitem{francesco2} Viorel Barbu, Michael R\"{o}ckner, Francesco Russo, \textit{Doubly probabilistic representation for the
stochastic porous media type equation}. Annales de l’Institut Henri Poincaré. Section Probabilités et Statistiques, vol. 53 No. 4, pp. 2043-2073, nov, 2017.


\bibitem{BrezisMM} {Haim Brezis, \textit{Operateurs maximaux monotones et semi-groupes de contractions dans les espaces de Hilbert}, Amsterdam by North-Holland Pub Co, 1973.}

\bibitem{Ciotir} Ioana Ciotir,  \textit{A Trotter type result for the stochastic
porous media equations}, Nonlinear Anal., Theory Methods Appl., Ser. A,
Theory Methods 71, No. 11, (2009) 5606-5615.



\bibitem{eu-conv} Ioana Ciotir, \textit{Convergence of the solutions for the
stochastic porous media equations and homogenization}, Journal of Evolution
Equation, 11 (2011), 339-370.

\bibitem{eu-conv2} Ioana Ciotir, \textit{A Trotter-type theorem for nonlinear stochastic equations in variational formulation and homogenization}, Differential and Integral Equations, Khayyam Publishing Company, ISSN 0893 - 4983, Volume 24, Issue 3-4, (2011) p. 371-388.

\bibitem{franco1} Ioana Ciotir and Franco Flandoli and Dan Goreac, \textit{An Existence Result for a Stochastic Stefan Problem With Mushy Region and Turbulent Transport Noise} arXiv preprint arXiv:2505.08500 (2025).



\bibitem{franco2} Ioana Ciotir and Franco Flandoli and Dan Goreac, \textit{The Stefan problem with mushy region as a scaling limit of stochastic PDE with turbulent transport}, Journal of Dynamics and Differential Equations, January 2026.

\bibitem{reika} Ioana Ciotir, Reika Fukuizumi, Dan Goreac, \textit{The stochastic fast logarithmic equation in Rd with multiplicative Stratonovich noise}. Journal of Mathematical Analysis and Applications, 542(1), 128786, 2025.

\bibitem{IDS} Ioana Ciotir, Dan Goreac, Juan Li et al. \textit{A stochastic porous media Schrödinger equation: Feynman-type motivation, well-posedness and control interpretation.} J. Evol. Equ. 25, 5 (2025).

\bibitem{anna} Ioana Ciotir, Dan Goreac, Anna-Mariya Otsetova, \textit{Two-Phase Stefan Problem with Convection and Stochastic Noise}, to appear in Irregular Stochastic Analysis, Springer series.


\bibitem{DZ}  Giuseppe Da Prato and Jerzy Zabczyk, \textit{Stochastic
equations in infinite dimensions}, Encyclopedia Math. Appl., 152, Cambridge
University Press, Cambridge, 2014.

\bibitem{EK} {Perla El Kettani, 
 Well-posedness of a stochastic phase-field problem with multiplicative noises, Math. Methods Appl. Sci. 43 (2020), No. 15, 8538–856.}

\bibitem{HMS} {Danielle Hilhorst, Masayasu Mimura, and Reiner Sch\"atzle,
Vanishing latent heat limit in a Stefan-like problem arising in biology,
Nonlinear Anal., Real World Appl. 4, No. 2, (2003) 261-285.}

\bibitem{HSSG}{Danielle Hilhorst, Florian Salin, Victor Schneider, and Yueyuan Gao, 
	Lecture notes on the singular limit of reaction-diffusion systems, 
	Interdiscip. Inf. Sci. 29, No. 1, (2023) 1-53.}

\bibitem{Karatzas} Ioannis Karatzas and Steven Shreve, \textit{Brownian
motion and stochastic calculus}, Vol. 113, Springer Science and Business
Media, 1991.

\bibitem{PR} Claudia Prévôt and Michael Röckner,  \textit{A Concise Course 
on Stochastic Differential Equations}, Lecture Notes in Math. 1905, Springer, Berlin, 2007.

\bibitem{Edi1} Aurel Răşcanu, Eduard Rotenstein, \textit{The Fitzpatrick function - a bridge between convex analysis and multivalued stochastic differential equations}, J. Convex Anal., no. 18, no. 1, pp. 105-138, 2011.


\bibitem{Edi2} Aurel Răşcanu, Eduard Rotenstein, \textit{Obstacle problems for parabolic SDEs with Hölder continuous diffusion: from weak to strong solutions}, J. Math. Anal. Appl., Volume 450, Issue 1 (June, 1), pp. 647-669, 2017.

\bibitem{francesco} Michael Röckner and Francesco Russo, \textit{Uniqueness for a class of stochastic Fokker-Planck and
porous media equations}. Journal of Evolution Equations, Springer-Verlag, Journal
of Evolution Equations, vol. 17 (3), pp. 1049-1062, Springer-Verlag, oct, 2017.

\bibitem{Vallet}  {Guy Vallet, Stochastic perturbation of nonlinear
degenerate parabolic problems, Differ. Integral Equ. 21, No. 11-12, (2008)
1055-1082.}
\end{thebibliography}
\end{document}